\DeclareMathOperator{\Id}{Id}
\def\FF{\mathbb{F}}
\newtheorem{theorem}{Theorem}[section]
\newtheorem{lemma}[theorem]{Lemma}
\newtheorem{proposition}[theorem]{Proposition}
\newtheorem{corollary}[theorem]{Corollary}
\theoremstyle{definition}
\newtheorem{definition}[theorem]{Definition}
\newtheorem{example}[theorem]{Example}
\theoremstyle{remark}
\newtheorem{remark}[theorem]{Remark}
\numberwithin{equation}{section}
\def\separa{\hbox to 14 truecm{\hrulefill}}
\author[P. Danchev]{Peter Danchev}
\address{Institute of Mathematics and Informatics, Bulgarian Academy of Sciences, 1113 Sofia, Bulgaria}
\email{danchev@math.bas.bg}
\thanks{The first author was partially supported by the Bulgarian National Science Fund under Grant KP-06 No. 32/1 of December 07, 2019.}
\author[E. Garc\'\i a]{Esther Garc\'\i a}
\address{ Departamento de Matem\'{a}tica  Aplicada, Ciencia e Ingenier\'{\i}a de los Materiales y Tecnolog\'{\i}a Electr\'onica,
Universidad Rey Juan Carlos, 28933 M\'{o}s\-to\-les (Madrid), Spain}
\email{esther.garcia@urjc.es}
\thanks{The second author was partially supported by Ayuda Puente 2022, URJC}
\author[M. G\'omez Lozano]{Miguel G\'omez Lozano}
\address{Departamento de \'Algebra, Geometr\'{\i}a y
Topolog\'{\i}a, Universidad de M\'alaga, 29071 M\'alaga, Spain}
\thanks{The three authors were partially supported by  the Junta de Andaluc\'{\i}a FQM264.}
\email{miggl@uma.es}
\begin{document}

\title[Decompositions of Matrices Into a Sum of Torsion and Nilpotent Matrices]{Decompositions of Matrices Into a Sum of
\\ Torsion Matrices and Matrices of Fixed Nilpotence}
\maketitle

\begin{abstract}
For $n\ge 2$ and  fixed   $k\ge 1$,  we  study when a square matrix  $A$  over an arbitrary field $\mathbb{F}$ can be decomposed as $T+N$ where $T$ is a torsion matrix and $N$ is a nilpotent matrix with $N^k=0$. For fields of prime characteristic, we show that this  decomposition holds as soon as the characteristic polynomial of $A\in \mathbb{M}_{n}(\mathbb{F})$ is algebraic over its base field and the rank of $A$ is at least $\frac nk$, and we present several examples that show that the decomposition does not hold in general. Furthermore, we completely solve this decomposition problem for $k=2$ and nilpotent matrices over arbitrary fields (even over division rings). This somewhat continues our recent publications in Lin. \& Multilin. Algebra (2023) and Internat. J. Algebra \& Computat. (2022) as well as it strengthens results due to Calugareanu-Lam in J. Algebra \& Appl. (2016).
\end{abstract}

\bigskip
{\footnotesize \textit{Key words}: (torsion) matrices, nilpotents, zero-square elements, polynomials}

{\footnotesize \textit{2010 Mathematics Subject Classification}: 15A21, 15A24, 15B99, 16U60}

\section{Introduction and Fundamentals}

The decomposition of matrices over an arbitrary field into the sum of some special elements, like nilpotent elements, idempotent elements, potent elements, units, etc., was in the focus of so many researchers for a long time (see, e.g., \cite{AM}, \cite{AT1}, \cite{AT2}, \cite{B}, \cite{BCDM}, \cite{BM}, \cite{Sh}, \cite{St1} and \cite{St2} and the bibliography cited therewith). Specifically, concerning our own work on the subject, in \cite{DGL1} we found some necessary (and sufficient) conditions when any square matrix over a field (finite or infinite) is expressible as a sum of a diagonalizable matrix and a nilpotent matrix of index less than or equal to two. In particular, we also obtained some results on the expression of square matrices into the sum of a potent matrix and a square-zero matrix over finite fields. Nevertheless, such a decomposition does not hold for fields of zero characteristic (see \cite[Example 4.3]{DGL1}). Further insight in that matter over some special finite rings was achieved by us in \cite{DGL2}. We also refer the interested reader to \cite{D} for some other aspects of the realization of matrices into the sum of specific elements over certain fields.

%

By combining the notions of invertibility and nilpotence, C\v{a}lug\v{a}reanu and  Lam introduced in 2016 the notion of {fine} rings \cite{CL}: those in which every nonzero element can be written as the sum of an invertible element and a nilpotent one, proving in that work that every nonzero square matrix over a division ring is the sum of an invertible matrix and a nilpotent matrix. The rings whose nonzero idempotents are fine turned out to be an interesting class of indecomposable rings and were studied in \cite{CZ1} by  C\v{a}lug\v{a}reanu and Zhou.  In 2021, the same authors focused on rings in which every nonzero nilpotent element is fine, which they called $NF$ rings, and showed that for a commutative ring $R$ and $n\ge 2$, the matrix ring $\mathbb{M}_n(R)$ is $NF$ if and only if $R$ is a field; see \cite{CZ}. A slightly more general class of rings than  fine rings was defined in \cite{Da} under the name {nil-good rings} (every element $a$ can be expressed the sum $a=n+u$ where $n$ is  nilpotent and  $u$ is either zero or a unit); in \cite{GS} it is shown that the matrix ring $\mathbb{M}_{n}(D)$ over a division ring $D$ is nil-good.  In general, no restriction in the index of nilpotence is required in  these decompositions.

In our work \cite{DGL3} we considered the ring of matrices $\mathbb{M}_{n}(\mathbb{F})$ over an arbitary field $\mathbb{F}$, we fixed a bound $k$ for the index of nilpotence, and studied when a  matrix $A$ in  $\mathbb{M}_{n}(\mathbb{F})$ could be expressed as the sum of an invertible matrix $U$ and a nilpotent matrix $N$ with $N^k=0$. Here we will continue our study in this branch by replacing the invertibility condition of $U$ by being a torsion matrix.  Recall that a   torsion matrix $T$ is the one for which there is a positive integer $s$ such that $T^s$ is the identity matrix. One elementarily sees that such a matrix is necessarily invertible as well as that it is $s+1$-potent, i.e., $T^{s+1}=T$. Canonical forms of torsion matrices were studied by D. Sjerve and a full classification over the rational numbers is presented in his paper \cite{Sj}.

The paper is organized as follows: in the first section we will show that the desired  decomposition holds as soon as the characteristic polynomial of $A$ is algebraic over its base field and its rank satisfies a certain bound, and we present several examples that show that the decomposition does not hold in general. In the second section, we focus on nilpotent matrices and deal with the problem of finding a necessary and sufficient condition to decompose such matrices as the sum of a torsion matrix and a zero-square matrix (fixed nilpotence $k\le 2$). Since we solve this problem by dealing with the Jordan canonical blocks of the considered nilpotent matrix, our result also holds for nilpotent matrices over division rings.

\section{Decomposing Matrices Into a Sum of Torsion Matrices and Matrices of Fixed Nilpotence}

As usual, for convenience of the presentation, let us specify that the letter $\mathbb{F}$ will stand an arbitrary field unless it is not specified something else, and the symbol $\mathbb{M}_n(\mathbb{F})$ is reserved for the matrix ring over $\mathbb{F}$. All other unexplained explicitly notations are standard and will be in an agreement with the book \cite{H}.

\medskip

In our work \cite{DGL3} we showed:

\begin{theorem}\label{nilpotentmain}\cite[Theorem 2.7]{DGL3}
Let $n\ge 2$. Let $\mathbb{F}$ be a field, consider the ring $\mathbb{M}_{n}(\mathbb{F})$ and let us fix $k\ge 1$. Given a nonzero matrix $A\in \mathbb{M}_{n}(\mathbb{F})$,  there exists an invertible matrix $U\in \mathbb{M}_{n}(\mathbb{F})$ and a nilpotent matrix $N\in \mathbb{M}_{n}(\mathbb{F})$ with $N^k=0$ such that $A=U+N$ if and only if the rank of $A$ is greater than or equal to $\frac{n}{k}$
\end{theorem}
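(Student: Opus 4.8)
The plan is to prove both implications, beginning with the easier necessity. Suppose $A = U + N$ with $U$ invertible and $N^k = 0$. The crucial observation is that the index bound $N^k=0$ forces every Jordan block of $N$ to have size at most $k$, so $N$ has at least $\lceil n/k\rceil$ blocks and therefore $\dim\ker N \ge \lceil n/k\rceil \ge n/k$. I then restrict $A$ to $\ker N$: for every $v\in\ker N$ one has $Av = Uv + Nv = Uv$, so $A|_{\ker N} = U|_{\ker N}$ is injective because $U$ is invertible. Consequently $\operatorname{rank}(A) \ge \dim A(\ker N) = \dim\ker N \ge n/k$, which is the desired inequality.

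For the converse I would first reformulate the goal: it suffices to produce a nilpotent $N$ with $N^k=0$ for which $A-N$ is invertible, and then set $U := A-N$. Writing $\nu := \dim\ker A = n - \operatorname{rank}(A)$, any candidate $N$ must satisfy two competing constraints. On one hand, $\operatorname{im}(A-N)\subseteq \operatorname{im}A + \operatorname{im}N$ forces $\operatorname{rank}(N)\ge \nu$ in order that $A-N$ be surjective; on the other hand, $N^k=0$ caps $\operatorname{rank}(N)\le n-\lceil n/k\rceil$. The hypothesis $\operatorname{rank}(A)\ge n/k$ is exactly what makes the interval $[\nu,\; n-\lceil n/k\rceil]$ nonempty (since $\operatorname{rank}(A)$ is an integer, $\operatorname{rank}(A)\ge n/k$ is equivalent to $\operatorname{rank}(A)\ge \lceil n/k\rceil$), so the plan is to build an $N$ of rank $\nu$ — equivalently, with exactly $\operatorname{rank}(A)$ Jordan blocks, all of size $\le k$ and of sizes summing to $n$ — realizing an invertible $A-N$.

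To construct such an $N$ explicitly I would pass to a basis adapted to the Jordan (or rational canonical) decomposition of $A$, splitting the space by the Fitting decomposition into the part where $A$ is already invertible and the generalized $0$-eigenspace, on which $A$ is nilpotent with chains of sizes $d_1,\dots,d_m$ (so $\dim\ker A = m = \nu$). The idea is to define $N$ so that $A-N$ links these chains into a single invertible cyclic action: one adds "return" edges from the bottom (kernel) vectors up to suitable tops, each edge repairing one kernel direction. For a simple endpoint-linking the index of $N$ equals the length of the longest run of consecutive added edges, and when all blocks are large this automatically yields a low index, so the generic case is straightforward; a dimension count then shows that the total capacity of the chains can accommodate the $\nu$ required repairs precisely when $\operatorname{rank}(A)\ge n/k$.

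The step I expect to be the main obstacle is the extremal regime, where $\operatorname{rank}(A)$ is close to $n/k$ and, in particular, where $A$ has many size-one Jordan blocks at $0$, i.e.\ isolated kernel vectors. Naively chaining such vectors together through $N$ produces a long $N$-orbit and destroys $N^k=0$, so one cannot just form a single cycle through the endpoints. The delicate point is instead to route these repairs through the \emph{interiors} of the longer chains — and, when it is nonzero, through the already-invertible part of the space — exploiting cancellations so that each isolated vector is absorbed within fewer than $k$ steps. Verifying simultaneously that the resulting $A-N$ has trivial kernel and that $N^k=0$ is where the careful bookkeeping, and the bulk of the work, will lie.
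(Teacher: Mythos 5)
Your necessity argument is correct and complete: $N^k=0$ forces every Jordan block of $N$ to have size at most $k$, hence $\dim\Ker N\ge\lceil n/k\rceil$; since $A$ agrees with the invertible $U$ on $\Ker N$, the subspace $A(\Ker N)$ has dimension $\dim\Ker N$, so $\operatorname{rank}(A)\ge n/k$. (For the record, the present paper does not prove this statement at all --- it imports it from \cite{DGL3} --- so the comparison here is against what a complete proof requires, not against a proof printed above.)

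The sufficiency half, however, is a plan rather than a proof, and the step you defer is precisely where the theorem lives. Two concrete points. First, your ``interval nonemptiness'' computation ($\nu\le n-\lceil n/k\rceil$ iff $\operatorname{rank}(A)\ge n/k$) only shows that a suitable $N$ is not excluded by counting kernel dimensions; it constructs nothing. Second, the construction you actually describe --- return edges from the bottom of each $0$-chain to the top of the next, closing one cycle --- gives $N^2=0$ only when no size-one Jordan block occurs: writing $N=-\sum_i e_{t_{i+1},b_i}$ with $t_j,b_j$ the top and bottom indices of the $j$-th chain, the product $e_{t_{i+1},b_i}e_{t_{j+1},b_j}$ is nonzero exactly when some bottom coincides with some top, i.e.\ when a size-one chain sits in the cycle, and a run of $\ell$ consecutive size-one chains forces $N^{\ell+1}\neq 0$. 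Worse, in the regime $\frac nk\le\operatorname{rank}(A)<\frac n2$ no square-zero $N$ can work at all (it would need $\operatorname{rank}(N)\ge n-\operatorname{rank}(A)>\frac n2$, impossible for $N^2=0$), so the required $N$ must have index genuinely close to $k$, built by threading the isolated kernel vectors through the interiors of longer chains with cancellations. That is exactly the ``careful bookkeeping'' you leave for later; it is what \cite{DGL3} devotes its technical machinery to, and it is the same difficulty that Section 3 of this paper confronts for the torsion analogue via the explicit matrices $N_{s,r}$, $N'_{s,r}$ and the descending lemmas. Identifying the obstacle is not the same as overcoming it, so the ``if'' direction remains unproved.
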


In this section, we will address the following query:

\medskip

\noindent{\bf Problem:} {\it Given a fixed  $k\ge 1$, find necessary and sufficient conditions to decompose any non-zero square matrix $A$ over a field $\mathbb{F}$ as a sum of a torsion matrix $T$ and a  nilpotent matrix $N$ with $N^k=0$.}

\medskip

Notice that the proposed Problem is already solved for matrices over finite fields  by using Theorem~\ref{nilpotentmain} and the obvious fact that the unit group of finite rings is always torsion. Nevertheless, the rank condition is not enough to guarantee this decomposition when working over infinite fields. In the rest of this section, we will show some cases when this decomposition holds, and some counterexamples showing that the decomposition does not hold in general.

\begin{remark}\label{trace}
Let $A\in \mathbb{M}_n(\mathbb{F})$. If there exists a nilpotent matrix $N\in \mathbb{M}_n(\mathbb{F})$ ($N^k=0$) and $T=A-N$ satisfies $T^s=\Id$ for some $s\in \mathbb{N}$, then the following three points are fulfilled:

\begin{itemize}
\item the trace of $A$ coincides with the trace of $T$;
\item the minimal polynomial of $T$ divides $X^s-1$ and therefore the eigenvalues of $T$ (in some extension of $\mathbb{F}$) are $s$-roots of the unity. Moreover,  if  $X^s-1$ is separable, $T$ is diagonalizable;
\item  the trace of $T$ coincides with the sum of its eigenvalues, so it is an algebraic number over the base field of $\mathbb{F}$.
\end{itemize}

For example, a matrix $A\in \mathbb{M}_n(\mathbb{F})$, even of full rank, and whose trace is transcendent over its base field can never be decomposed into the sum $T+N$, where $T$ is a torsion matrix and $N$ is a nilpotent matrix.
\end{remark}

\medskip

Let $n\ge 2$. Recall that the trace of a polynomial $p(x)=x^{n}+b_{n-1}x^{n-1}+\dots+b_0\in \mathbb{F}[x]$ is the scalar $-b_{n-1}$ and coincides with the trace of the companion matrix $C(p(x))\in\mathbb{M}_n(\mathbb{F})$. Notice that the rank of a companion matrix is always greater than or equal to $n-1$.

We can now give a partial solution to the proposed above Problem. Concretely, the following statements hold.

\begin{proposition}\label{basecasetorsion} Let $n\ge 2$, let $p(x)\in \mathbb{F}[x]$ be a polynomial of degree $n$ and let $C(p(x))\in\mathbb{M}_n(\mathbb{F})$ be its companion matrix. If the trace of $p(x)$ can be expressed as the sum of $n$ different roots of the unity in some extension of $\mathbb{F}$, then $A$ can be decomposed (in some extension of $\mathbb{F}$) into $T+N$, where $T$ is a torsion matrix and $N^2=0$. In particular, this always holds if the trace of $p(x)$ is either $1$, or $-1$, or $0$.
\end{proposition}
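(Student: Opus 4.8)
The plan is to build the torsion summand directly as \emph{another companion matrix}, so that the nilpotent summand is forced into a trivially square-zero shape. Write $p(x)=x^n+b_{n-1}x^{n-1}+\dots+b_0$, so that the trace of $p(x)$ is $\tau=-b_{n-1}$, and set $A=C(p(x))$. Assume $\tau=\zeta_1+\dots+\zeta_n$ with the $\zeta_i$ pairwise distinct roots of unity lying in some extension $\mathbb{K}$ of $\mathbb{F}$. I would put $q(x)=\prod_{i=1}^n(x-\zeta_i)\in\mathbb{K}[x]$ and take $T:=C(q(x))$ and $N:=A-T=C(p(x))-C(q(x))$, both viewed in $\mathbb{M}_n(\mathbb{K})$.

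Two verifications then remain. First, that $T$ is torsion: being a companion matrix, $C(q(x))$ has minimal polynomial $q(x)$; letting $s$ be the least common multiple of the orders of the $\zeta_i$, each $\zeta_i$ is a root of $x^s-1$, and since $q(x)$ is a product of distinct linear factors each dividing $x^s-1$ we get $q(x)\mid x^s-1$. Writing $x^s-1=q(x)h(x)$ and evaluating at $T$ gives $T^s-\Id=q(T)h(T)=0$, so $T^s=\Id$. Second, that $N^2=0$: the companion matrices $C(p(x))$ and $C(q(x))$ differ only in their last column, and their $(n,n)$ entries are $-b_{n-1}$ and minus the coefficient of $x^{n-1}$ in $q(x)$, the latter being $-\bigl(-\sum_i\zeta_i\bigr)=\tau=-b_{n-1}$; hence these entries coincide and $N=w\,e_n^{\top}$ for some $w\in\mathbb{K}^n$ whose last coordinate vanishes. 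Therefore $N^2=w\,(e_n^{\top}w)\,e_n^{\top}=0$, and $A=T+N$ is the sought decomposition.

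The conceptual crux lies exactly in this second step, and it is what makes the proof work so smoothly: passing to the companion matrix of $q(x)$ rather than to a diagonal matrix with entries $\zeta_i$ guarantees that $N$ is supported on a single column, and matching the coefficient of $x^{n-1}$ --- which is precisely the hypothesis that $\tau$ equals the sum of the $\zeta_i$ (the necessary trace condition of Remark~\ref{trace}) --- is what forces the one potentially obstructing diagonal entry of $N$ to be zero, upgrading a rank-one matrix to a square-zero one.

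It then remains to justify the ``In particular'' clause, namely that each of $\tau=0,1,-1$ is a sum of $n$ distinct roots of unity for every $n\ge 2$. In characteristic $0$ I would take for $\tau=0$ the full set of $n$-th roots of unity (whose sum is $0$), for $\tau=-1$ the set of $(n+1)$-th roots of unity with $1$ deleted (sum $-1$, exactly $n$ distinct elements), and for $\tau=1$ the negatives of that last set, since negation permutes roots of unity and reverses the sign of the sum. In characteristic $p>0$ the task is even easier, because every nonzero element of $\overline{\mathbb{F}_p}$ is a root of unity: inside a sufficiently large finite field one may choose $n-1$ distinct nonzero elements freely and solve for the last summand, the finitely many coincidences being avoidable once the field is large enough. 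This part is routine but genuinely characteristic-dependent, and handling all $n$ and all characteristics uniformly is the only mildly delicate bookkeeping in the argument.
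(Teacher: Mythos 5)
Your proposal is correct and follows essentially the same route as the paper: both take $T=C(q(x))$ for $q(x)=\prod_i(x-\zeta_i)$, so that $N=C(p(x))-C(q(x))$ is supported on the last column with vanishing $(n,n)$ entry (precisely because the traces match), forcing $N^2=0$. The only difference is that you spell out the verification that $C(q(x))$ is torsion and the realization of $0,\pm 1$ as sums of distinct roots of unity in each characteristic, details the paper leaves implicit.
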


\begin{proof}
By hypothesis, the trace of $p(x)=x^{n}+b_{n-1}x^{n-1}+\dots+b_0$ can be expressed as $\alpha_1+\dots+\alpha_{n}$ for some different roots of unity  $\alpha_1,\dots,\alpha_{n}$ in some extension of $\mathbb{F}$.
Let us consider the polynomial $q(x)=(x-\alpha_1)\cdots(x-\alpha_{n})=x^{n}+a_{n-1}x^{n-1}+\dots+a_0$. Thus, we have
\begin{align*}
C&(p(x))=\left(
          \begin{array}{cccc}
            0 & 0 & \dots  & -b_0 \\
            1 & 0 &  &  \vdots\\
             & \ddots & \ddots &  \\
            0 &  & 1 & -b_{n-1}\\
          \end{array}
        \right)\\&=
        \underbrace{\left(
          \begin{array}{cccc}
            0 & 0 & \dots  & -a_0 \\
            1 & 0 &  &  \vdots\\
             & \ddots & \ddots &  \\
            0 &  & 1 & -a_{n-1}\\
          \end{array}
        \right)}_{C(q(x))}+\underbrace{\left(
          \begin{array}{cccc}
            0 & 0 & \dots  & a_0-b_0 \\
            0 & 0 &  &  \vdots\\
             & \ddots & \ddots &  \\
            0 &  & 0 & a_{n-1}-b_{n-1}\\
          \end{array}
        \right)}_{N},
\end{align*}
where $T:=C(q(x))$ is a diagonalizable torsion matrix and $N^2=0$,
because $a_{n-1}-b_{n-1}=0$, as required. In view of these arguments, the last claim follows now at once.
\end{proof}

As an immediate consequence, we obtain:

\begin{corollary} Let $A\in\mathbb{M}_n(\mathbb{F})$ and let $p_1(x),\dots, p_k(x)$ be the elementary divisors of $A$ (respectively, the invariant factors of $A$). If each $p_i(x)$ has degree $n_i$ and its trace is a sum of $n_i's$ different roots of the unity in some extension of $\mathbb{F}$, then $A$ can be decomposed (in some extension of $\mathbb{F}$) into $T+N$, where $T$ is a torsion matrix and $N^2=0$.
\end{corollary}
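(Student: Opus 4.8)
The plan is to reduce the statement to Proposition~\ref{basecasetorsion} by passing to the rational canonical form of $A$. First I would recall that, by the theory of the rational (respectively, primary rational) canonical form, the matrix $A$ is similar over $\mathbb{F}$ to the block-diagonal matrix $\bigoplus_{i=1}^{k} C(p_i(x))$, where $C(p_i(x))\in\mathbb{M}_{n_i}(\mathbb{F})$ is the companion matrix of the invariant factor (respectively, elementary divisor) $p_i(x)$ and $\sum_i n_i = n$. Thus there is an invertible $P\in\mathbb{M}_n(\mathbb{F})$ with $A = P\bigl(\bigoplus_i C(p_i(x))\bigr)P^{-1}$.

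Next, since by hypothesis each $p_i(x)$ has degree $n_i$ and its trace is a sum of $n_i$ different roots of unity in some extension of $\mathbb{F}$, I would apply Proposition~\ref{basecasetorsion} to each block separately: in a suitable extension we obtain a decomposition $C(p_i(x)) = T_i + N_i$ with $T_i$ a diagonalizable torsion matrix and $N_i^2 = 0$. Fixing a single extension $\mathbb{K}$ of $\mathbb{F}$ large enough to contain all the roots of unity needed for every block (for instance, a compositum of the individual extensions), all these decompositions hold simultaneously over $\mathbb{K}$.

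I would then assemble the block-diagonal matrices $T := \bigoplus_i T_i$ and $N := \bigoplus_i N_i$, so that $\bigoplus_i C(p_i(x)) = T + N$. Here $N^2 = \bigoplus_i N_i^2 = 0$, and $T$ is torsion: if $T_i^{s_i} = \Id$, then $T^{s} = \Id$ for $s = \mathrm{lcm}(s_1,\dots,s_k)$, which is a finite integer since there are only finitely many blocks. Finally, conjugating by $P$ gives $A = (P T P^{-1}) + (P N P^{-1})$, where $P T P^{-1}$ is again torsion because $(PTP^{-1})^{s} = P T^{s} P^{-1} = \Id$, while $(PNP^{-1})^2 = P N^2 P^{-1} = 0$. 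This produces the desired decomposition of $A$.

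The entire mathematical content is carried by Proposition~\ref{basecasetorsion}, so I do not expect a genuine obstacle; the only point requiring care is the bookkeeping of the ground field, since the individual block decompositions may a priori live in different extensions and must be brought into one common extension $\mathbb{K}$ before the blocks are assembled. This is precisely why the conclusion is phrased ``in some extension of $\mathbb{F}$'' and is otherwise harmless.
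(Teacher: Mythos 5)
Your proof is correct and follows essentially the same route as the paper: reduce $A$ by similarity to the direct sum of the companion matrices $C(p_i(x))$ and apply Proposition~\ref{basecasetorsion} to each block. You simply spell out details the paper leaves implicit (the $\mathrm{lcm}$ argument showing the block-diagonal sum of torsion matrices is torsion, the conjugation back by $P$, and the need for a common extension field containing all the required roots of unity), all of which are sound.
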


\begin{proof} If $p_1(x),\dots, p_k(x)$ are the elementary divisors (respectively, the invariant factors of $A$), then the matrix $A$ is similar to a direct sum of companion matrices of each $p_i(x)$. However, utilizing Proposition \ref{basecasetorsion}, we can express every companion matrix of $p_i(x)$ into the sum $T_i+N_i$, where $T_i$ is a torsion matrix and $N_i^2=0$, as needed.
\end{proof}

The next two curious comments are worthwhile.

\begin{remark}\label{counterexamplenxn}
Not every matrix $A\in \mathbb{M}_n(\mathbb{F})$ whose trace is a sum of $n$ roots of unity can be written as $T+N$, where $T$ is a torsion matrix and $N^2=0$ (even if those roots are different and $A$ satisfies the rank condition). Indeed, let $n\ge 2$ and let us consider the element $a\in \mathbb{F}$ such that $na$ is a sum of roots of unity, but $a$ itself is not a root of unity (notice that such an element $a$ always exists and is easy to be constructed, so we leave out the details). Then the matrix $A=a \Id\in \mathbb{M}_n(\mathbb{F})$ cannot be written as a sum $T+N$; otherwise there would exist $m\in\mathbb{N}$ such that $T^m=\Id$; but then $\Id=T^m=(A-N)^m=a^m \Id - ma^{m-1}N$, so that $(a^m-1)^2\Id=((a^m-1)\Id)^2=(m a^{m-1}N)^2=0$, a contradiction.\\
For example, the matrix $A=\left(
                             \begin{array}{cc}
                               \frac 12 & 0 \\
                               0 & \frac 12 \\
                             \end{array}
                           \right)
$
cannot be expressed as $T+N$ even if its trace is the sum of two (different) $6^{\rm th}$-roots of unity: $\frac 12+ \frac{\sqrt{3}}{2}i$ and $\frac 12- \frac{\sqrt{3}}{2}i$.
\end{remark}

\begin{remark}
Let $\mathbb{F}$ be a field of characteristic 0.
When $n\ge 3$, if $p(x)$ is a polynomial of degree $n$ and the trace of $p(x)$  is the sum of $n$ equal roots of unity, then the matrix $C(p(x))$  can never be written as $T+N$.
Indeed, let $\alpha$ be a root of unity, and consider a degree $n$ polynomial $p(x)$  whose trace is $n\alpha$ and its companion  matrix $C(p(x))$. Suppose now that $C(p(x))=T+N$ where $T$ satisfies $T^m=\Id$ for some $m\in \mathbb{N}$ and $N^2=0$. Since the minimal polynomial of $T$ divides $X^m-1$ and this polynomial has no multiple roots, then $T$ is diagonalizable and its eigenvalues are all roots of unity whose sum coincides with the trace of $T$ (which, on the other side, coincides with the trace of $p(x)$), so it is exactly $n\alpha$.
The only solution to $n\alpha=\alpha_1+\dots+\alpha_n$, $\alpha$, $\alpha_1$, $\dots$, $\alpha_n$ being roots of unity, is $\alpha=\alpha_1=\dots=\alpha_n$.
 Therefore, the eigenvalues of $T$ are all equal to $\alpha$ and thus $T=\alpha\Id$. But then $N=C(p(x))-\alpha\Id$ should have zero square, which is manifestly untrue (the rank of $C(p(x))-\alpha\Id$ is at least $n-1$).

For example, the matrix  $$
C((x-1)^3)=\left(
             \begin{array}{ccc}
               0 & 0 & 1 \\
               1 & 0 & -3 \\
               0 & 1 & 3 \\
             \end{array}
           \right)
$$
cannot be expressed as $T+N$, where $T$ is a torsion matrix and $N^2=0$.
\end{remark}

\medskip

Nevertheless, when we focus on fields of prime characteristic, we can partially solve the Problem. Let $\mathbb{F}$ be a field of prime characteristic $p$ and let us denote by $\mathbb{F}_p$ its base field.

\begin{lemma}\label{Th1} Let $n\ge 2$.
If $\mathbb{F}$ is a field of characteristic $p$ and we fix  $k\ge1$, then for every matrix $A\in \mathbb{M}_n(\mathbb{F})$ of rank greater than or equal to $\frac nk$ and whose entries are algebraic over $\mathbb{F}_p$ there exists a matrix $N$ with $N^k=0$ such that $A-N$ is a torsion matrix.
\end{lemma}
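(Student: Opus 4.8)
The plan is to reduce the statement to the finite-field setting, where Theorem~\ref{nilpotentmain} applies directly and where every unit is automatically torsion. The decisive observation is that, although $\mathbb{F}$ itself may be infinite, the matrix $A$ has only finitely many entries $a_{ij}$, each of which is algebraic over the prime field $\mathbb{F}_p=\mathbb{Z}/p\mathbb{Z}$. Hence the subfield $K:=\mathbb{F}_p(a_{11},\dots,a_{nn})\subseteq\mathbb{F}$ that they generate is a finitely generated algebraic extension of $\mathbb{F}_p$, so $[K:\mathbb{F}_p]<\infty$; being a finite-dimensional extension of the finite field $\mathbb{F}_p$, the field $K$ is itself \emph{finite}, with $|K|=p^{[K:\mathbb{F}_p]}$ elements. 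By construction $A\in\mathbb{M}_n(K)$.

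Next I would check that the hypothesis on the rank survives the passage from $\mathbb{F}$ to the subfield $K$. Since the rank of a matrix equals the largest size of a nonvanishing minor, and all such minors are elements of $K$, the rank of $A$ computed over $K$ coincides with its rank over $\mathbb{F}$. Thus, viewed as an element of $\mathbb{M}_n(K)$, the matrix $A$ still satisfies $\operatorname{rank}(A)\ge n/k$, which is exactly the hypothesis required by Theorem~\ref{nilpotentmain}.

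Now I would invoke Theorem~\ref{nilpotentmain} over the finite field $K$: it yields an invertible matrix $U\in\mathbb{M}_n(K)$ and a nilpotent matrix $N\in\mathbb{M}_n(K)$ with $N^k=0$ such that $A=U+N$. Because $K$ is finite, the general linear group $GL_n(K)$ is a finite group, so its element $U$ has finite multiplicative order, i.e. $U^s=\Id$ for some $s\in\mathbb{N}$. Setting $T:=U=A-N$, we obtain a torsion matrix $T$ and a matrix $N$ with $N^k=0$ satisfying $A=T+N$, as required.

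I do not expect any genuine obstacle here: the whole argument rests on the elementary but decisive fact that a finitely generated algebraic extension of the finite prime field $\mathbb{F}_p$ is again finite, which collapses the problem onto the already-established Theorem~\ref{nilpotentmain}. The only two points deserving a line of care are the invariance of the rank under restriction of scalars to the subfield $K$, and the observation that every element of the finite group $GL_n(K)$ is automatically torsion; both are immediate.
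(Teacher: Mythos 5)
Your proof is correct and follows essentially the same route as the paper: pass to the finite subfield $K$ generated over $\mathbb{F}_p$ by the entries of $A$, apply Theorem~\ref{nilpotentmain} there, and note that an invertible matrix over a finite field is torsion. Your two added remarks (rank invariance under restriction of scalars to $K$, and finiteness of a finitely generated algebraic extension of $\mathbb{F}_p$) are points the paper leaves implicit, and they are handled correctly.
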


\begin{proof}
The subfield $\mathbb{K}$ of $\mathbb{F}$ generated by the base field $\mathbb{F}_p$ and by the entries of matrix $A$ is a finite field, and $A\in  \mathbb{M}_n(\mathbb{K})$. Apply Theorem \ref{nilpotentmain} to decompose $A$ as $U+N$, where $U\in \mathbb{M}_n(\mathbb{K})$ is invertible and $N\in \mathbb{M}_n(\mathbb{K})$ satisfies $N^k=0$. Since $U$ is an invertible matrix over a finite field, being invertible is equivalent to being a torsion matrix, as wanted.
\end{proof}

The conditions on the entries of the matrix can be translated to the coefficients of the characteristic polynomial of the matrix $A$. We will say that a polynomial is algebraic over $\mathbb{F}_p$ is all its coefficients are algebraic over $\mathbb{F}_p$.

\begin{theorem}\label{nilpotentprimefield}
Let  $\mathbb{F}$ be a field of characteristic $p$, let us fix an index of nilpotence $k\ge 1$ and let $A\in \mathbb{M}_n(\mathbb{F})$ of rank greater than or equal to $\frac nk$. If the characteristic polynomial of $A$ is algebraic over $\mathbb{F}_p$, then $A$ can be written as $T+N$, where $T$ is a torsion matrix and $N^k=0$. In particular, this decomposition always holds for nilpotent matrices of rank greater than or equal to $\frac nk$.
\end{theorem}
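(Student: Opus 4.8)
The plan is to reduce the statement to Lemma~\ref{Th1} by replacing $A$ with a similar matrix whose entries are algebraic over $\mathbb{F}_p$. First I would observe that the property we want is invariant under conjugation: if $B=P^{-1}AP$ for an invertible $P\in\mathbb{M}_n(\mathbb{F})$ and we can write $B=T'+N'$ with $(T')^s=\Id$ and $(N')^k=0$, then $A=(PT'P^{-1})+(PN'P^{-1})$ exhibits $A$ as a torsion matrix plus a matrix of $k$-th power zero, since conjugation preserves both $X^s=\Id$ and $X^k=0$. Conjugation also preserves the rank and the characteristic polynomial. So it suffices to find, among the matrices similar to $A$, one whose entries are algebraic over $\mathbb{F}_p$, and then invoke Lemma~\ref{Th1}.

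The natural candidate is the rational canonical form $B$ of $A$ over $\mathbb{F}$, namely the direct sum of the companion matrices of the invariant factors $p_1(x)\mid p_2(x)\mid\cdots\mid p_m(x)$ of $A$. The entries of each companion matrix are $0$, $1$ and the coefficients of the corresponding $p_i(x)$, so $B$ has entries algebraic over $\mathbb{F}_p$ exactly when every invariant factor does. The key step, and the main point that needs justification, is therefore that the hypothesis that the characteristic polynomial $\chi_A(x)$ is algebraic over $\mathbb{F}_p$ forces each invariant factor to be algebraic over $\mathbb{F}_p$ as well.

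To see this, let $\mathbb{L}$ denote the subfield of $\mathbb{F}$ consisting of the elements algebraic over $\mathbb{F}_p$ (the relative algebraic closure of $\mathbb{F}_p$ in $\mathbb{F}$); by hypothesis $\chi_A(x)\in\mathbb{L}[x]$. Over a fixed algebraic closure $\overline{\mathbb{F}}$, the polynomial $\chi_A(x)$ splits into linear factors whose roots, being roots of a polynomial over $\mathbb{L}$, are algebraic over $\mathbb{F}_p$. Each invariant factor $p_i(x)$ is monic, lies in $\mathbb{F}[x]$, and divides $\chi_A(x)$, so by unique factorization in $\overline{\mathbb{F}}[x]$ it is a product of a subcollection of these linear factors; hence its coefficients are elementary symmetric functions of elements algebraic over $\mathbb{F}_p$ and are thus themselves algebraic over $\mathbb{F}_p$. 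Being simultaneously in $\mathbb{F}$, they lie in $\mathbb{L}$. Consequently $B\in\mathbb{M}_n(\mathbb{L})$, so every entry of $B$ is algebraic over $\mathbb{F}_p$.

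Finally I would close the argument: since $A$ and $B$ are similar they have the same rank, so the rank of $B$ is at least $\frac{n}{k}$, and $B$ meets the hypotheses of Lemma~\ref{Th1}. That lemma yields $B=T'+N'$ with $T'$ torsion and $(N')^k=0$; conjugating back by the transition matrix gives the desired decomposition $A=T+N$. The ``in particular'' assertion is then immediate, because a nilpotent matrix has characteristic polynomial $x^n$, whose coefficients $0$ and $1$ lie in $\mathbb{F}_p$ and are a fortiori algebraic over $\mathbb{F}_p$. I expect the only delicate point to be the passage from ``$\chi_A(x)$ algebraic'' to ``the invariant factors are algebraic''; everything else is a formal transfer along a similarity, and note that the transition matrix $P$ need not itself have algebraic entries.
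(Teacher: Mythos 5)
Your proof is correct and follows essentially the same route as the paper: pass to a rational canonical form (the paper uses the primary form built from the elementary divisors, you use the invariant-factor form), show its entries are algebraic over $\mathbb{F}_p$ because the roots of the characteristic polynomial are, and invoke Lemma~\ref{Th1}, transferring the decomposition back through the similarity. Your write-up is in fact slightly more careful than the paper's, making explicit both the similarity-transfer step and the symmetric-function/divisibility argument that the paper leaves implicit.
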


\begin{proof}
Let us consider the primary rational canonical form $C$ of $A$, whose characteristic polynomial is algebraic over $\mathbb{F}_p$. The eigenvalues of $C$ (roots in some extension of $\mathbb{F}$ of the characteristic polynomial) are algebraic over $\mathbb{F}_p$ and, therefore, all the elementary divisors of $A$ are algebraic polynomials over $\mathbb{F}_p$. Consequently, the entries of $C$ are all algebraic over $\mathbb{F}_p$ and we can apply Lemma \ref{Th1} to get the proof.
\end{proof}

\medskip

\noindent{\bf Open Question 1:} Given a fixed index of nilpotence $k\ge 1$, find a suitable criterion for the decomposition of an arbitrary matrix over a field of zero characteristic into the sum of a torsion matrix and a  nilpotent matrix of index of nilpotence $\le k$.

\medskip

In the following section we will answer this question for $k\le 2$ and nilpotent matrices of rank at least $\frac n2$. Since our arguments are quite technical, we leave open the question of decomposing nilpotent matrices of rank at least $\frac nk$ into torsion and nilpotent matrices of index less than or equal to $k$.

\section{Decomposing Nilpotent Matrices Into a Sum of Torsion and Square-Zero Matrices}

The goal of this  section is to show that, for any field $\FF$, every nilpotent matrix in $\mathbb{M}_n(\mathbb{F})$ whose rank is at least $\frac n2$ can always be decomposed as the sum of a torsion matrix and a square-zero matrix. Recall that every nilpotent matrix is similar to a direct sum of its Jordan blocks -- all of them associated to the eigenvalue 0.

As in the previous section, the main difficulties arise when dealing with Jordan blocks of size 1. The condition on the rank guarantees that they can always be combined with Jordan blocks of bigger size. Let us consider the following two examples.

\begin{example}
Let $A$ be the nilpotent matrix

\medskip

$$
  A=\left(
      \begin{array}{ccc|c}
        0 & 0 & 0 & 0 \\
        1 &0 & 0 & 0 \\
        0 & 1 & 0 & 0 \\
        \hline
        0 & 0 & 0 & 0 \\
      \end{array}
    \right),
$$

\medskip

\noindent which is the smallest nilpotent matrix decomposed into Jordan blocks with one Jordan block of size 1 and  rank of $A$ greater than or equal to $\frac n2$ ($n=4$). Let us define

\medskip

$$
  N=\left(
      \begin{array}{cccc}
        0 &1 & 0 & -1 \\
       0 & 0 & -1 & 0 \\
        0 & 0 & 0 & 0 \\
       0 & 0 & -1 & 0 \\
      \end{array}
    \right)
$$

\medskip

\noindent and let $T=A-N$. Then $N^2=0$ and the characteristic polynomial of $T$ is $x^4-1$, so $A=T+N$ with $T^4=\Id$ and $N^2=0$.
\end{example}

\begin{example} In some situations, more than one Jordan block of size 1 has to be combined with a Jordan block of bigger size. Consider for example
$$
A=\left(
    \begin{array}{cccc|cc}
      0 & 0 & 0 & 0 &0 & 0 \\
     1 & 0 & 0 & 0 &0 & 0 \\
     0 & 1& 0 & 0 &0 & 0 \\
      0 & 0 & 1& 0 &0 & 0 \\
      \hline
      0 & 0 & 0 & 0 &0 & 0 \\
      0 & 0 & 0 & 0 &0 & 0 \\
    \end{array}
  \right),
$$
which again is the smallest nilpotent matrix expressed in Jordan blocks having two blocks of size 1 and satisfying ${\rm rank}(A)\ge \frac n2$ ($n=6$ here). Let us consider the zero-square matrix
$$
N=\begin{pmatrix}0 & 0 & 1 & 0 & 0 & -1\\
0 & 0 & 0 & -1 & 0 & 0\\
0 & 1 & 0 & 0 & -1 & 0\\
0 & 0 & 0 & 0 & 0 & 0\\
0 & 0 & 0 & -1 & 0 & 0\\
0 & 1 & 0 & 0 & -1 & 0\end{pmatrix}.
$$
Then $T=A-N$ has characteristic polynomial equal to $x^6-1$, so $A=T+N$ with $T^6=\Id$ and $N^2=0$.
\end{example}
The construction of the zero-square matrices above depend on how many Jordan blocks of size 1 we have to combine with a Jordan block of bigger size. Let $s$ be the number of  Jordan blocks of size 1 (recall that the condition on the rank  requires that $ 2(s+1)\le n$). The construction of $N$ also depends on a certain parameter, called $r$, that must satisfy $ 1+2s+r<n$ and that it is zero in the above examples. With such ingredients, we are
going to define a family of zero-square matrices called $N_{s,r}$ and an auxiliary family of zero-square matrices denoted $N'_{s,r}$.

\begin{definition}\label{nilp}
Let $s,r\in \mathbb{N}\cup\{0\}$ such that $ 2(s+1)\le n$ and $ 1+2s+r<n$. Let $A\in \mathbb{M}_n(\FF)$ be a nilpotent matrix consisting on a single Jordan block and $s$ Jordan blocks of size $1$  (notice that the condition $ 2(s+1)\le n$ is equivalent to $A$ having rank greater or equal to $\frac n2$). Let us define the matrices
\begin{align*}
   N_{0,r}:&=-e_{1,n},\\
N_{s,r}:&=\sum_{i=n-2s-r}^{n-s-r-2} (e_{i+1,i}+e_{i+1+s+r+1,i})+e_{1,n-s-r-1}\\
&   -\sum_{i=n-s}^{n-1} (e_{i+1,i}+e_{i+1-s-r-1,i})-e_{1,n},\hbox{ if $s\ge 1$, }\\
N'_{0,r}:&=-e_{1,n},\\
N'_{s,r}:&=Q_{n-2s-r,n-s-r-1}N_{s,r}Q_{n-2s-r,n-s-r-1}=\\
&=\sum_{i=n-2s-r}^{n-s-r-2} (e_{i+1,i}-e_{i+1+s+r+1,i})-e_{1,n-s-r-1}\\
&   -\sum_{i=n-s}^{n-1} (e_{i+1,i}-e_{i+1-s-r-1,i})-e_{1,n},\hbox{  if $s\ge 1$,  }\\
T_{s,r}:&=A-N_{s,r},\hbox{  for  $s\ge 0$,}\\
T'_{s,r}:&=A-N'_{s,r}\hbox{  for  $s\ge 0$.}
\end{align*}
\end{definition}

Now we are going to show that the  matrices $N_{s,r}$ and $N'_{s,r}$ are zero-square matrices. Let us recall the elementary matrices of $\mathbb{M}_n(\FF)$:
\begin{itemize}
  \item[] $P_{i,j}(t):=\Id+t e_{i,j}$, $t\in \FF$.
  \item[] $P_{i,j}:=\Id+ e_{i,j}+e_{j,i}-e_{i,i}-e_{j,j}$
  \item[] $Q_i(s)=\Id+(s-1)e_{i,i}$, $0\neq s\in \FF$.
\end{itemize}
All of them are invertible with inverses $(P_{i,j}(t))^{-1}=P_{i,j}(-t)$, $(P_{i,j})^{-1}=P_{i,j}$ and $(Q_i(s))^{-1}=Q_i(1/s)$.
Let us define $Q_{i,j}:=\sum_{r=i}^j Q_r(-1)$.

\begin{proposition}\label{nilp2}
Let $\FF$ be a field and take $n\in \mathbb{N}$ and $s,r\in \mathbb{N}\cup\{0\}$ such that $ 2(s+1)\le n$ and $ 1+2s+r<n$. Then, the matrices $N_{s,r}$ and $N'_{s,r}$ in $\mathbb{M}_n(\FF)$ are nilpotent of index $2$.
\end{proposition}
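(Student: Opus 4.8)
The plan is to prove that $N_{s,r}^2=0$ and $(N'_{s,r})^2=0$ via the elementary observation that a square matrix $M$ satisfies $M^2=0$ precisely when $M$ annihilates each of its own columns, i.e. when the image of $M$ is contained in its kernel. That the nilpotence index equals exactly $2$ (and not $1$) will be immediate: the summand $-e_{1,n}$ occurs in every one of these matrices, and the only other term living in row $1$ is $\pm e_{1,n-s-r-1}$, which sits in a different column (since $n-s-r-1\ne n$); hence the $(1,n)$ entry is nonzero and none of the matrices vanishes. The degenerate case $s=0$ is disposed of at once, because $N_{0,r}=N'_{0,r}=-e_{1,n}$ and $n\ge 2$ give $e_{1,n}e_{1,n}=\delta_{n,1}e_{1,n}=0$.

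For $s\ge 1$ I would read the $j$-th column $c_j:=N_{s,r}u_j$ off Definition~\ref{nilp}, writing $u_k$ for the $k$-th standard basis (column) vector. The nonzero columns are $c_j=u_{j+1}+u_{j+s+r+2}$ for $j\in[n-2s-r,\,n-s-r-2]$, then $c_{n-s-r-1}=u_1$, then $c_j=-u_{j+1}-u_{j-s-r}$ for $j\in[n-s,\,n-1]$, and finally $c_n=-u_1$, all other columns being zero. Since $N_{s,r}u_k=c_k$, evaluating $N_{s,r}c_j$ merely sums the columns indexed by the support of $c_j$. For $j$ interior to the first range one gets $N_{s,r}c_j=c_{j+1}+c_{j+s+r+2}=(u_{j+2}+u_{j+s+r+3})+(-u_{j+s+r+3}-u_{j+2})=0$, and for $j$ interior to the third range the analogous two vectors cancel termwise as well. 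At the boundary $j=n-s-r-2$ one obtains $c_{n-s-r-1}+c_n=u_1-u_1=0$, and at $j=n-1$ one obtains $-c_n-c_{n-s-r-1}=u_1-u_1=0$. Finally $N_{s,r}c_{n-s-r-1}=c_1$ and $N_{s,r}c_n=-c_1$, and here the hypothesis $1+2s+r<n$ is used precisely: it yields $n-2s-r\ge 2$, so that column $1$ is zero. Collecting the cases gives $N_{s,r}c_j=0$ for every $j$, that is, $N_{s,r}^2=0$.

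For $N'_{s,r}$ no further computation is needed. By its very definition $N'_{s,r}=QN_{s,r}Q$ with $Q:=Q_{n-2s-r,\,n-s-r-1}$, which is the diagonal involution carrying $-1$ in positions $n-2s-r,\dots,n-s-r-1$ and $+1$ elsewhere, so that $Q^2=\Id$. Therefore $(N'_{s,r})^2=QN_{s,r}QQN_{s,r}Q=QN_{s,r}^2Q=0$, and $N'_{s,r}\ne 0$ because it is conjugate to the nonzero matrix $N_{s,r}$. If one prefers not to invoke conjugation, the identical column bookkeeping applies verbatim to $N'_{s,r}$.

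The only genuine difficulty is the index bookkeeping in the middle step: one must treat the four index ranges uniformly even when some of them are empty (this already occurs at $s=1$, where the first range contributes no columns, as in the first worked example), and one must verify that the boundary terms are arranged so that the two copies of $\pm u_1$ coming from $c_{n-s-r-1}$ and $c_n$ cancel against each other. Once the columns have been written out correctly, every cancellation is termwise and is forced by the two shifts $+1$ and $+s+r+2$ built into Definition~\ref{nilp}, so beyond careful indexing nothing hard remains.
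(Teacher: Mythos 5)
Your proof is correct and takes essentially the same route as the paper's: both arguments show that the image of $N_{s,r}$ is contained in its kernel by explicit computation on the standard basis (the paper collects the columns into the spanning set $\{e_1\}\cup\{e_{n-s+i}+e_{n-2s-r-1+i}\mid i=1,\dots,s\}$ and checks it is annihilated, while you check $N_{s,r}c_j=0$ column by column, which produces the same cancellations, including the boundary cancellation $c_{n-s-r-1}+c_n=u_1-u_1=0$ and the use of $1+2s+r<n$ to ensure column $1$ vanishes). Your treatment of $N'_{s,r}$ by conjugation with the involution $Q_{n-2s-r,n-s-r-1}$ is exactly the paper's argument as well.
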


\begin{proof} Along this proof, let us identify matrices with the endomorphisms acting on the canonical basis $\{e_1,\dots,e_n\}$ of $\FF^n$. Clearly $N_{0,r}^2=(N'_{0,r})^2=0$. Suppose that $s\ge 1$ and let us denoted by $N$ the matrix $N_{s,r}$. For every $i=1\dots, n$ we have:

\begin{itemize}
\item[(1)] if $ i=1,\dots, n-2s-r-1$, $N(e_i)=0$,
\item[(2)] if $ i=n-2s-r,\dots, n-s-r-2$, $N(e_{i})=e_{i+1}+e_{i+1+s+r+1}$,
\item[(3)]  $N(e_{n-s-r-1})=e_1$,
\item[(4)] if $i=n-s-r,\dots, n-s-1$, $N(e_{i})=0$,
\item[(5)] if $i=n-s,\dots, n-1$, $N(e_{i})=-e_{i+1}-e_{i+1-s-r-1}$,
\item[(6)] $N(e_n)=-e_1$
\end{itemize}

Therefore, the image of $N$ is spanned by

$$
\{e_1\}\cup\{e_{n-2s-r+1}+e_{n-s+2}, \dots, e_{n-s-r-1}+e_{n}\}\cup  \{e_{n-s+1}+e_{n-2s-r}, \dots, e_{n}+e_{n-s-r-1}\},
$$ i.e., the image of $N$ is  the subspace generated by
$$S=\{e_1\}\cup \{e_{n-s+i}+e_{n-2s-r-1+i}\quad |\quad i=1,\dots, s\}.$$
Let us see that $N(S)=0$. Clearly, $N(e_1)=0$; moreover, if $i=1,\dots, s-1$, we have

\begin{align*}
      & N(e_{n-s+i}+e_{n-2s-r-1+i})=\\
      &=-e_{n-s+i+1}-e_{n-2s-r+i}+e_{n-2s-r+i}+e_{n-s+i+1}=0
\end{align*}
and $N(e_{n}+e_{n-s-r-1})=-e_1+e_1=0$. Therefore, $N^2=0$.

Since the matrix $Q_{n-2s-r,n-s-r-1}$ satisfies $Q_{n-2s-r,n-s-r-1}^2=\Id$, we deduce that $N'_{s,r}$ is also a zero-square matrix, as required.
\end{proof}

In Theorem \ref{charpoly} we will calculate the characteristic polynomials of the matrices
$$
T_{s,r}:=A-N_{s,r}\quad \hbox{ and }\quad T'_{s,r}:=A-N'_{s,r},
$$
defined in \ref{nilp}.
Since characteristic polynomials are invariant under similarity, our argument will consist on transforming by similarity the matrices $T_{s,r}$ and $T'_{s,r}$ in a finite number of steps into matrices of the form $T_{s',r'}$ or $T'_{s',r'}$, $s'=0,1$ or 2.
We will first prove two descending lemmas.
In each of these descending lemmas,   we will decrease the first subscript of $T_{s,r}$ or of $T'_{s,r}$ by three units and increase the second subscript by three units. Afterwards, it will only remain to explicitly calculate the characteristic polynomial of the matrices $T_{s',r'}$ and $T'_{s',r'}$, $s'=0,1,2$.

\begin{lemma}[From $T_{s,r}$ to $T'_{s-3,r+3}$]\label{fromTtoT'}
Let $\FF$ be a field and let $A\in \mathbb{M}_n(\FF)$ be a nilpotent matrix consisting on a single Jordan block of rank bigger than or equal to $\frac n2$ and $s$ Jordan blocks of size $1$. Suppose that $s\ge 3$ and let $r\in \mathbb{N}\cup \{0\}$ such that $ 1+2s+r<n$. Let $T_{s,r}$ and $T'_{s,r}$ be the matrices defined in \ref{nilp}.
Then, the matrix $T_{s,r}$ is similar to the matrix $T'_{s-3,r+3}$.
\end{lemma}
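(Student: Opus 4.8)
The plan is to produce an explicit invertible matrix $P$, written as a product of the elementary matrices $P_{i,j}(t)$, $P_{i,j}$ and $Q_i(-1)$ recalled above, for which $P\,T_{s,r}\,P^{-1}=T'_{s-3,r+3}$; since each such factor is invertible with a known inverse, this yields the asserted similarity. First I would record both maps on the canonical basis $\{e_1,\dots,e_n\}$. From $T_{s,r}=A-N_{s,r}$ and the description of $N_{s,r}$ obtained in the proof of Proposition \ref{nilp2}, one finds that $T_{s,r}$ acts as a chain $e_j\mapsto e_{j+1}$ on the initial segment $1\le j\le n-2s-r-1$, then by $s-1$ forward jumps $e_j\mapsto -e_{j+s+r+2}$, then by the pivot step $e_{n-s-r-1}\mapsto e_{n-s-r}-e_1$, then by a second chain $e_j\mapsto e_{j+1}$, then by $s$ merge steps $e_j\mapsto e_{j+1}+e_{j-s-r}$, and finally $e_n\mapsto e_1$. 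Running the same computation for $T'_{s-3,r+3}=A'-N'_{s-3,r+3}$, where $A'$ is the nilpotent matrix with one Jordan block of size $n-s+3$ and $s-3$ blocks of size $1$ and $N'_{s-3,r+3}$ is read off from Definition \ref{nilp}, shows that the only differences are the following: both chains are three steps longer, the jump and merge regions begin three indices later and carry the opposite sign, and the pivot step becomes $e_{n-s-r-1}\mapsto e_{n-s-r}+e_1$.

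Guided by this comparison, I would then assemble $P$. The sign discrepancies across the jump and merge regions are exactly of the type already exploited in the identity $N'_{s,r}=Q_{n-2s-r,n-s-r-1}\,N_{s,r}\,Q_{n-2s-r,n-s-r-1}$, so they should be cleared by conjugating with a diagonal involution built from the matrices $Q_i(-1)$; the remaining task is to convert the leading jump and merge steps of $T_{s,r}$ into honest chain steps $e_j\mapsto e_{j+1}$, so that each of the two chains is lengthened by three, which is precisely the effect of conjugating by suitable transvections $P_{i,j}(\pm 1)$. With $P$ so defined, I would verify $P\,T_{s,r}=T'_{s-3,r+3}\,P$ by evaluating both sides on each $e_j$, organised according to the six regimes listed above.

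The hard part will not be the idea but the bookkeeping at the seams between these regimes, where one must confirm that all the indexed basis vectors created by $P$ and by the two endomorphisms are distinct and lie in $\{1,\dots,n\}$. This is exactly where the hypotheses are consumed: the inequality $1+2s+r<n$ keeps $n-2s-r-1\ge 1$, so the initial chain is nonempty and the jump region, the pivot index $n-s-r-1$ and the jump targets $n-s+2,\dots,n$ never reach $e_1$ nor collide with one another; and $s\ge 3$ guarantees both that there are three merge steps available to be rewritten and that the target $-e_{n-s+2}$ of the very first jump already lands inside the merge range $n-s\le j\le n-1$, which is what makes the localized rewriting close up. I would treat separately the boundary values $s-3\in\{0,1,2\}$, where $T'_{s-3,r+3}$ is governed by the small-index definitions (for instance $T'_{0,r+3}$ is simply the cyclic shift $e_j\mapsto e_{j+1}$ with $e_n\mapsto e_1$) and where the new chain positions of $T_{s,r}$ may include the pivot rather than a jump, checking that the same $P$, read in those degenerate ranges, still carries $T_{s,r}$ onto the target. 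Once these range checks are in hand, the verification on each regime is routine, and multiplying together the elementary factors produces the explicit $P$ realising $T_{s,r}\sim T'_{s-3,r+3}$.
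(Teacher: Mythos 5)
Your proposal takes essentially the same route as the paper's proof: the paper also conjugates $T_{s,r}$ by an explicit product of elementary matrices --- the transvections $P_{n-s+3,n-s+3-v}(1)$ and $P_{n-s+1-v,n-s+1}(1)$, the swap $P_{n-s+2,n-s+2-v}$, and the diagonal sign block $Q_{n-s+2-v,n-s+2}$, where $v=s+r+1$ --- and then verifies on the canonical basis, partitioned into the same six regimes you describe (with the regimes that are empty when $s=3$ or $s=4$ noted explicitly), that the conjugated matrix acts exactly as $T'_{s-3,r+3}$. Your basis-by-basis descriptions of both matrices, the role of the hypotheses $1+2s+r<n$ and $s\ge 3$ (in particular, that the first jump target $e_{n-s+2}$ lies in the merge range), and the organization of the verification all match the paper; the only content the paper adds is the explicit matrix $P$ realizing this plan.
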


\begin{proof}  For $v=s+r+1$ and
\begin{align*}
  T_1:&=P_{n-s+3,n-s+3-v}(-1)\cdot T_{s,r}\cdot P_{n-s+3,n-s+3-v}(1)\\
   T_2:&=P_{n-s+2,n-s+2-v}\cdot T_1\cdot P_{n-s+2,n-s+2-v}\\
   T_3:&=P_{n-s+1-v,n-s+1}(-1)\cdot T_2\cdot P_{n-s+1-v,n-s+1}(1),\\
   T_4:&=Q_{n-s+2-v,n-s+2}\cdot T_3\cdot Q_{n-s+2-v,n-s+2},
\end{align*}
we will show that $T_4=T'_{s-3,r+3}$.

Along this proof, let us identify matrices with the endomorphisms acting on the canonical basis $\{e_1,\dots,e_n\}$ of $\FF^n$.
Let us denote by $P$ the following matrix $$P:=P_{n-s+3,n-s+3-v}(1)\cdot P_{n-s+2,n-s+2-v}\cdot P_{n-s+1-v,n-s+1}(1)\cdot Q_{n-s+2-v,n+r+3-v}.$$
With this notation in hand, we have $T_4=P^{-1}T_{s,r}P$. Let us calculate $P(e_i)$, $i=1,\dots,n$:
\begin{itemize}
\item[] $P(e_i)=e_i$  for $i=1,2,\dots, n-s-v$,
\item[] $P(e_{n-s-v+1})=e_{n-s-v+1}$,
\item[] $P(e_{n-s-v+2})=-e_{n-s+2}$,
\item[] $P(e_{n-s-v+3})=-e_{n-s-v+3}-e_{n-s+3}$,
\item[] $P(e_i)=-e_i$ for  $i=n-s-v+4, \dots, n-v$ \quad  (if $n-s-v+4\le n-v$),
\item[] $ P(e_i)=-e_i$ for $i= n-v+1,\dots, n-s-1$ \quad  (if $n-v+1\le n-s-1$),
\item[] $P(e_{n-s})=-e_{n-s}$ \quad  (if $s\ne 3$),
\item[] $P(e_{n-s+1})=-e_{n-s+1}-e_{n-s+1-v}$,
\item[] $P(e_{n-s+2})=-e_{n-s+2-v}$ and
\item[] $P(e_i)=e_i$ for  $i=n-s+3,\dots, n$.
\end{itemize}

Let us see how $T_4=P^{-1}T_{s,r}P$ acts on the canonical basis: to do so let us divide the set $\{1,2,\dots, n\}$ into the following subsets relative to the definition of $T'_{s-3,r+3}$:
$$
\begin{array}{ll}
\Delta'_1:=\{1,\dots,n-s-v+3\} & \Delta'_2:=\{n-s-v+4,\dots, n-v-1\}\\
\Delta'_3:=\{n-v, \text{ if } s\ne 3\} &
  \Delta'_4:=\{n-v+1,\dots, n-s+2\}\\
  \Delta'_5:=\{n-s+3,\dots,n-1\}&\Delta'_6:=\{n\}
\end{array}
$$
(notice that if $s=3$, then $\Delta'_2=\emptyset$, $\Delta'_3=\emptyset$ and $\Delta'_5=\emptyset$; and if $s= 4$, then $\Delta'_2=\emptyset$).

\noindent $\bullet$ $T_4$ acting on elements of $\Delta'_1$:

\begin{itemize}
\item[(1)] $T_4(e_i)=P^{-1}T_{s,r}P(e_i)=P^{-1}T_{s,r}(e_i)=P^{-1}(e_{i+1})=e_{i+1}$  for $ i=1,2,\dots, n-s-v$,
\item[(2)] $T_4(e_{n-s-v+1})=P^{-1}T_{s,r}P(e_{n-s-v+1})=P^{-1}T_{s,r}(e_{n-s-v+1})=P^{-1}(-e_{n-s+2})=e_{n-s-v+2}$,
\item[(3)] $T_4(e_{n-s-v+2})=P^{-1}T_{s,r}P(e_{n-s-v+2})=P^{-1}T_{s,r}(-e_{n-s+2})=P^{-1}(-e_{n-s+3}-e_{n-s-v+3})=e_{n-s-v+3}$,
\item[(4)] $T_4(e_{n-s-v+3})=P^{-1}T_{s,r}P(e_{n-s-v+3})=P^{-1}T_{s,r}(-e_{n-s+3}-e_{n-s-v+3})=P^{-1}(-e_{n-s+4}-e_{n-s-v+4}+e_{n-s+4})=P^{-1}(-e_{n-s-v+4})=e_{n-s-v+4}$.
\end{itemize}

\noindent $\bullet$ $T_4$ acting on elements of $\Delta'_2$ (when $\Delta_2'\ne \emptyset$):

\begin{itemize}
\item[(5)] $T_4(e_i)=P^{-1}T_{s,r}P(e_i)=P^{-1}T_{s,r}(-e_i)=P^{-1}(e_{i+1+v})=e_{i+1+v}$ for $i=n-s-v+4, \dots, n-v-1$.
\end{itemize}

\noindent $\bullet$ $T_4$ acting on elements of $\Delta'_3$ (when $\Delta_3'\ne \emptyset$):

\begin{itemize}
\item[(6)] $T_4(e_{n-v})=P^{-1}T_{s,r}P(e_{n-v})=P^{-1}T_{s,r}(-e_{n-v})=P^{-1}(e_1-e_{n-v+1})=e_1+e_{n-v+1}$.
\end{itemize}

\noindent $\bullet$ $T_4$ acting on elements of $\Delta'_4$:
\begin{itemize}
\item[(7)] $T_4(e_i)=P^{-1}T_{s,r}P(e_i)=P^{-1}T_{s,r}(-e_i)=P^{-1}(-e_{i+1})=e_{i+1}$ for $i=n-v+1, \dots, n-s-1$,
\item[(8)] $T_4(e_{n-s})=P^{-1}T_{s,r}P(e_{n-s})=P^{-1}T_{s,r}(-e_{n-s})=P^{-1}(-e_{n-s+1}-e_{n-s+1-v})\\ =e_{n-s+1}$,
\item[(9)] $T_4(e_{n-s+1})=P^{-1}T_{s,r}P(e_{n-s+1})=P^{-1}T_{s,r}(-e_{n-s+1}-e_{n-s+1-v}) =\\ P^{-1}(-e_{n-s+2}-e_{n-s+2-v}+e_{n-s+2})=
P^{-1}(-e_{n-s+2-v})=e_{n-s+2}$,
\item[(10)] $T_4(e_{n-s+2})=P^{-1}T_{s,r}P(e_{n-s+2})=P^{-1}T_{s,r}(-e_{n-s+2-v})=P^{-1}(e_{n-s+3})=e_{n-s+3}$.
\end{itemize}

\noindent $\bullet$ $T_4$ acting on elements of $\Delta'_5$ (when $\Delta_5'\ne \emptyset$):
\begin{itemize}
\item[(11)] $T_4(e_{n-s+3})=P^{-1}T_{s,r}P(e_{n-s+3})=P^{-1}T_{s,r}(e_{n-s+3})=P^{-1}(e_{n-s+4}+e_{n-s+4-v})=e_{n-s+4}-e_{n-s+4-v}$,
\item[(12)] $T_4(e_i)=P^{-1}T_{s,r}P(e_i)=P^{-1}T_{s,r}(e_i)=P^{-1}(e_{i+1}+e_{i+1-v})=e'_{i+1}-e'_{i+1-v}$ for $i=n-s+4, \dots, n-1$.
\end{itemize}

\noindent $\bullet$ $T_4$ acting on elements of $\Delta'_6$:
\begin{itemize}
\item[(13)] $T_4(e_n)=P^{-1}T_{s,r}P(e_n)=P^{-1}T_{s,r}(e_n)=P^{-1}(e_{1})=e_1$,
\end{itemize}
which proves that $T_4=P^{-1}T_{s,r}P$ behaves on the canonical basis in the same way as $T'_{s-3,r+3}$ so $T_4=T'_{s-3,r+3}$.
\end{proof}

Following the same arguments as in the above proof we also obtain the following descending lemma.

\begin{lemma}[From $T'_{s,r}$ to $T_{s-3,r+3}$]\label{fromT'toT}
Let $\FF$ be a field and let $A\in \mathbb{M}_n(\FF)$ be a nilpotent matrix consisting on a single Jordan block of rank bigger than or equal to $\frac n2$ and $s$ Jordan blocks of size $1$. Suppose that $s\ge 3$ and let $r\in \mathbb{N}\cup \{0\}$ such that $ 1+2s+r<n$.  Let $T_{s,r}$ and $T'_{s,r}$ be the matrices defined in \ref{nilp}.
Then the matrix $T'_{s,r}$ is similar to the matrix $T_{s-3,r+3}$. Indeed, if $v=s+r+1$ and
\begin{align*}
  T'_1:&=P_{n-s+3,n-s+3-v}(1)\cdot T'_{s,r}\cdot P_{n-s+3,n-s+3-v}(-1)\\
   T'_2:&=P_{n-s+2,n-s+2-v}\cdot T'_1\cdot P_{n-s+2,n-s+2-v}\\
   T'_3:&=P_{n-s+1-v,n-s+1}(1)\cdot T'_2\cdot P_{n-s+1-v,n-s+1}(-1),   \\
   T'_4:&=Q_{n-v-s+3,n-s+1}\cdot T'_3\cdot Q_{n-v-s+3,n-s+1},
\end{align*}
we obtain that $T'_4=T_{s-3,r+3}$.
\end{lemma}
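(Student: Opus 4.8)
The plan is to follow the proof of Lemma~\ref{fromTtoT'} line by line, the only difference being that the elementary factors appearing here are the sign-reversed counterparts of those used there; this reflects the defining relation $N'_{s,r}=Q_{n-2s-r,n-s-r-1}\,N_{s,r}\,Q_{n-2s-r,n-s-r-1}$ between the two families in Definition~\ref{nilp}. First I would collapse the four successive conjugations $T'_1,\dots,T'_4$ into a single change of basis: putting $v=s+r+1$ and
$$
P':=P_{n-s+3,n-s+3-v}(-1)\cdot P_{n-s+2,n-s+2-v}\cdot P_{n-s+1-v,n-s+1}(-1)\cdot Q_{n-v-s+3,n-s+1},
$$
the inverse rules $(P_{i,j}(t))^{-1}=P_{i,j}(-t)$, $P_{i,j}^{-1}=P_{i,j}$ and $Q_{i,j}^2=\Id$ yield immediately $T'_4=(P')^{-1}\,T'_{s,r}\,P'$. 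As the assertion is a similarity statement, it suffices to establish the sharper identity $T'_4=T_{s-3,r+3}$ as endomorphisms of $\FF^n$ acting on the canonical basis $\{e_1,\dots,e_n\}$.

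A helpful preliminary remark is that $v=s+r+1$ is invariant under the substitution $(s,r)\mapsto(s-3,r+3)$, so the target $T_{s-3,r+3}$ is built with the same shift $v$, which is exactly what makes the indices close up. The verification then splits into two steps. First I would record $P'(e_i)$ for every $i=1,\dots,n$, obtaining a list entirely parallel to the one for $P(e_i)$ in Lemma~\ref{fromTtoT'}, but with the signs dictated by $P'$. Second, I would evaluate $T'_4(e_i)=(P')^{-1}\,T'_{s,r}\,P'(e_i)$ by applying $P'$, then $T'_{s,r}=A-N'_{s,r}$ (whose action on the basis is read off from Definition~\ref{nilp}, just as the action of $N_{s,r}$ was recorded in the proof of Proposition~\ref{nilp2}), and finally $(P')^{-1}$. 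Splitting $\{1,\dots,n\}$ into the six index blocks attached to the definition of $T_{s-3,r+3}$ and checking the identity on each block would finish the proof.

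The main obstacle is the sign and index bookkeeping, and this is precisely where the primed/unprimed distinction is essential: the source $T'_{s,r}$ carries the sign-reversed coupling terms while the target $T_{s-3,r+3}$ carries the ordinary ones, and the final conjugation by the sign involution $Q_{n-v-s+3,n-s+1}$ is what restores the correct signs, turning a primed matrix into an unprimed one. As in Lemma~\ref{fromTtoT'}, the degenerate cases $s=3$ and $s=4$, in which some of the six blocks are empty, must be handled separately; the standing inequalities $2(s+1)\le n$ and $1+2s+r<n$ ensure that every index occurring in $P'$ and in $T_{s-3,r+3}$ lies in $\{1,\dots,n\}$ and that the whole construction is well defined. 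Beyond the template of the previous lemma no new idea is required: once $P'(e_i)$ has been tabulated, the computation is entirely mechanical.
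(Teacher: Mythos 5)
Your proposal takes exactly the paper's route: the paper's own proof of this lemma is the single sentence that it follows ``the same arguments as in the above proof,'' i.e.\ one repeats the computation of Lemma~\ref{fromTtoT'} with the sign-reversed elementary factors, which is precisely your plan. Your collapsed change of basis $P'=P_{n-s+3,n-s+3-v}(-1)\cdot P_{n-s+2,n-s+2-v}\cdot P_{n-s+1-v,n-s+1}(-1)\cdot Q_{n-v-s+3,n-s+1}$ correctly inverts the stated conjugation chain, and your observation that $v=s+r+1$ is unchanged under $(s,r)\mapsto(s-3,r+3)$ is the key bookkeeping fact that makes the block-by-block verification close up, so the proposal is correct and essentially identical in approach.
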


After these two descending lemmas, let us compute the characteristic polynomials of $T_{s,r}$ and $T'_{s,r}$ when $s=0,1$ or $2$.

\begin{remark}[Case $s=0$]\label{case0}
Let $\FF$ be a field and let $A\in \mathbb{M}_n(\FF)$ be a nilpotent matrix consisting on a single Jordan block of rank bigger than or equal to $\frac n2$ and no Jordan blocks of size $1$ ($s=0$). For any $r<n-1$ we have that $T_{0,r}=T'_{0,r}=\sum_{t=1}^{n-1}e_{t+1,t}+e_{1,n}$, and its characteristic polynomial is exactly $x^n-1$.
\end{remark}

\begin{lemma} [Case $s=1$] \label{case1}
Let $\FF$ be a field and let $A\in \mathbb{M}_n(\FF)$ be a nilpotent matrix consisting on a single Jordan block of rank bigger than or equal to $\frac n2$ and one Jordan block of size $1$. Let $r\ge 0$ be such that $r+3<n$. Let us consider the matrices $T_{1,r}=A-N_{1,r}$ and $T'_{1,r}=A-N'_{1,r}$.
Then, the characteristic polynomials are
\begin{itemize}
\item $p_{T_{1,r}}(x)=x^n+x^{r+2}-x^{n-r-2}-1=(x^{r+2}-1)(x^{n-r-2}+1)$
\item $p_{T'_{1,r}}(x)=x^n-x^{r+2}+x^{n-r-2}-1=(x^{r+2}+1)(x^{n-r-2}-1)$.
\end{itemize}
\end{lemma}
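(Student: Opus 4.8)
The plan is to make the two matrices completely explicit by specializing Definition \ref{nilp} to $s=1$, and then to recognize both $T_{1,r}$ and $T'_{1,r}$ as rank-two perturbations of a single cyclic permutation matrix whose characteristic polynomial and resolvent $(xI-C)^{-1}$ are known in closed form.

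Specializing first: for $s=1$ the first sum in $N_{s,r}$ is empty and the second reduces to the single index $i=n-1$, so writing $c:=n-r-2$ one gets $N_{1,r}=e_{1,c}-e_{n,n-1}-e_{c,n-1}-e_{1,n}$ and, after conjugating by the sign-flip $Q_c(-1)$, $N'_{1,r}=-e_{1,c}-e_{n,n-1}+e_{c,n-1}-e_{1,n}$. Taking $A=\sum_{i=1}^{n-2}e_{i+1,i}$ (the single Jordan block together with the size-one block), I will check that in both cases the subdiagonal of $T=A-N$ is completed to $\sum_{i=1}^{n-1}e_{i+1,i}$ and the corner entry $e_{1,n}$ appears, so that
$$T_{1,r}=C+D,\qquad T'_{1,r}=C-D,$$
where $C:=\sum_{i=1}^{n-1}e_{i+1,i}+e_{1,n}$ is the cyclic permutation matrix and $D:=-e_{1,c}+e_{c,n-1}$ is a rank-two matrix. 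The decisive and convenient observation is that the two matrices differ only by the sign of this single perturbation.

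Since $C$ is a cyclic shift, $\det(xI-C)=x^n-1$, and its resolvent has the explicit entries $\bigl((xI-C)^{-1}\bigr)_{a,b}=x^{\,n-1-d}/(x^n-1)$ with $d=(a-b)\bmod n$. Writing $D=UV^{\mathsf{T}}$ with $U=[\,e_1\ \ e_c\,]$ and a suitable $V$ built from $e_c,e_{n-1}$, I will apply the Weinstein--Aronszajn (matrix determinant) identity
$$\det\bigl((xI-C)+UV^{\mathsf{T}}\bigr)=\det(xI-C)\cdot\det\bigl(I_2+V^{\mathsf{T}}(xI-C)^{-1}U\bigr),$$
which reduces the whole computation to a $2\times2$ determinant. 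The four entries of $V^{\mathsf{T}}(xI-C)^{-1}U$ are exactly the resolvent entries at positions $(c,1),(c,c),(n-1,1),(n-1,c)$, namely $x^{r+2},x^{n-1},x,x^{n-r-2}$ over $x^n-1$; substituting them and clearing denominators, the $x^n$ terms cancel and one is left with $x^n-1+x^{r+2}-x^{n-r-2}$ for $T_{1,r}$. Replacing $D$ by $-D$ only flips the sign of the two linear-in-resolvent terms, producing $x^n-1-x^{r+2}+x^{n-r-2}$ for $T'_{1,r}$; the factorizations $(x^{r+2}\mp1)(x^{n-r-2}\pm1)$ then follow by inspection.

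The routine part is the final $2\times2$ determinant together with the sign bookkeeping. The only genuine obstacle is the very first step: correctly reading $N_{1,r}$ and $N'_{1,r}$ off Definition \ref{nilp} (recognizing the empty sum, the single surviving index, and the effect of conjugating by $Q_c(-1)$) and confirming that under the hypothesis $r+3<n$ the special index $c=n-r-2$ lies in $\{2,\dots,n-2\}$, so that the positions $(1,c),(c,n-1),(1,n)$ and the subdiagonal are all distinct and the identification $T_{1,r}=C\pm D$ is valid. Once that identification is secured, everything else is forced. As an alternative to the matrix determinant lemma one could instead expand $\det(xI-T_{1,r})$ directly along its first row, whose only nonzero entries sit in columns $1,c,n$; the three resulting minors are nearly bidiagonal and yield the same polynomial, but that route demands more careful tracking of the minors, so I expect the resolvent approach to be cleaner.
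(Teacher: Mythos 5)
Your proposal is correct, and its computational core is genuinely different from the paper's. The setup coincides: specializing Definition~\ref{nilp} to $s=1$ does give $N_{1,r}=e_{1,c}-e_{n,n-1}-e_{c,n-1}-e_{1,n}$ and $N'_{1,r}=-e_{1,c}-e_{n,n-1}+e_{c,n-1}-e_{1,n}$ with $c=n-r-2$, hence $T_{1,r}=C+D$ and $T'_{1,r}=C-D$ for the cyclic permutation $C=\sum_{t=1}^{n-1}e_{t+1,t}+e_{1,n}$ and $D=e_{c,n-1}-e_{1,c}$ --- exactly the expressions the paper records at the start of its proof --- and your observation that $r+3<n$ forces $c\in\{2,\dots,n-2\}$ (so the perturbation positions never collide with the subdiagonal or the corner) is precisely the nondegeneracy check needed. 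The divergence is in how $\det(xI-T_{1,r})$ is then evaluated. The paper proves a small ``moving lemma'': multiplying by elementary matrices of determinant one, an off-diagonal entry $a$ at position $(i,j)$, $j>i$, can be slid one column right or one row up at the cost of a factor $x$; it slides the two perturbation entries into the corner $(1,n)$, where they accumulate to $x^{r+2}-x^{n-r-2}$, and finishes with a Laplace expansion along the first row. You instead invoke the matrix determinant lemma over $\FF(x)$ with $M=xI-C$, together with the closed-form resolvent of the cyclic shift, reducing everything to a $2\times 2$ determinant; your four resolvent entries $x^{r+2},x^{n-1},x,x^{n-r-2}$ (over $x^n-1$) are correct, the two quadratic terms $x^n/(x^n-1)^2$ do cancel as you claim, and the $T'$ case follows by flipping the sign of $D$. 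Both arguments are valid over an arbitrary field, since you work in $\FF(x)$ where $x^n-1\neq 0$. Your route buys a sign-symmetric treatment that handles $T_{1,r}$ and $T'_{1,r}$ simultaneously and would scale naturally to larger $s$ --- note that the $s=2$ case (Lemma~\ref{case2}) is again a low-rank perturbation of $C$, which the paper instead handles by an entirely different block-triangularizing change of basis --- while the paper's route is more elementary, using nothing beyond row/column operations and cofactor expansion.
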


\begin{proof} We will use the following general remark: suppose we are computing the determinant of a matrix of the form
\begin{align*}
B=\left( \begin{array}{ccccc}
             x & 0 &  \cdots &0& 1 \\
             -1 & x &  \cdots & & 0 \\
             \vdots  & \ddots & \ddots & \vdots&\vdots  \\
              0 & 0  &\ddots & x & 0 \\
             0 & 0 &  \cdots & -1& x \\
           \end{array}
         \right)+a\, e_{i,j}
\end{align*} where $a$ is a non-zero element at position $(i,j)$, $j>i$. If we multiply $B$ on the left by $P_{i,j+1}(a)$ (we add to row $i$ the row $j+1$ multiplied by $a$), we get a new matrix that has a zero at position $(i,j)$ and has $ax$ at position $(i,j+1)$. Since $|P_{i,j+1}(a)|=1$ we get
\begin{align*}
&\det\left(\left( \begin{array}{ccccc}
             x & 0 &  \cdots &0& 1 \\
             -1 & x &  \cdots & & 0 \\
             \vdots  & \ddots & \ddots & \vdots&\vdots  \\
              0 & 0  &\ddots & x & 0 \\
             0 & 0 &  \cdots & -1& x \\
           \end{array}
         \right)+a\, e_{i,j}\right)=\\
      &   \det\left(\left( \begin{array}{ccccc}
             x & 0 &  \cdots &0& 1 \\
             -1 & x &  \cdots & & 0 \\
             \vdots  & \ddots & \ddots & \vdots&\vdots  \\
              0 & 0  &\ddots & x & 0 \\
             0 & 0 &  \cdots & -1& x \\
           \end{array}
         \right)+ax\, e_{i,j+1}\right)
          \end{align*}
i.e., when computing the determinant of $B$, nonzero elements at positions $(i,j)$, $j>i$, can be moved to the right by multiplying  by $x$ at each step.

By a similar argument, multiplying $B$ on the right by the elementary matrix $P_{i-1,j}(a)$ (adding to column $j$ the column $i-1$ multiplied by $a$), we also have that
\begin{align*}
&\det\left(\left( \begin{array}{ccccc}
             x & 0 &  \cdots &0& 1 \\
             -1 & x &  \cdots & & 0 \\
             \vdots  & \ddots & \ddots & \vdots&\vdots  \\
              0 & 0  &\ddots & x & 0 \\
             0 & 0 &  \cdots & -1& x \\
           \end{array}
         \right)+a\, e_{i,j}\right)=\\
      &   \det\left(\left( \begin{array}{ccccc}
             x & 0 &  \cdots &0& 1 \\
             -1 & x &  \cdots & & 0 \\
             \vdots  & \ddots & \ddots & \vdots&\vdots  \\
              0 & 0  &\ddots & x & 0 \\
             0 & 0 &  \cdots & -1& x \\
           \end{array}
         \right)+ax\, e_{i-1,j}\right)
          \end{align*}
i.e., when computing the determinant of $B$, nonzero elements at positions $(i,j)$, $j>i$, can be moved upwards to position $(i-1,j)$ by multiplying at each step by $x$.

Notice that
\begin{align*}
     T_{1,r}&  =\sum_{t=1}^{n-1}e_{t+1,t}+e_{1,n}+e_{n-r-2,n-1}-e_{1,n-r-2}.\\
\end{align*}

Applying the argument above to the determinant of the matrix $x\Id -T_{1,r}$, we can move the nonzero element at position $(n-r-2,n-1)$ towards position $(1,n)$ obtaining $-x^{n-r-2}$ and we can move the non-zero element at position $({1,n-r-2})$ towards position $(1,n)$ obtaining $x^{r+2}$, getting at the end the determinant of a matrix of the form

\bigskip

\begin{align*}
\left( \begin{array}{cccccc}
             x & 0 & 0 & \cdots &0& -x^{n-r-2}+x^{r+2}-1 \\
             -1 & x & 0 & \cdots && 0 \\
             0 & -1 & x & \cdots &0& 0 \\
             \vdots & \vdots & \ddots & \ddots & \vdots&\vdots  \\
              0 & 0 & 0 &\ddots & x & 0 \\
             0 & 0 & 0 & \cdots & -1& x \\
           \end{array}
         \right).
\end{align*}

\bigskip

Finally, by the Laplace expansion of the determinant along the first row we get that the characteristic polynomial of $ T_{1,r}$ is given by $x^n-x^{n-r-2}+x^{r+2}-1$.

Similarly, since $$T'_{1,r} =\sum_{t=1}^{n-1}e_{t+1,t}+e_{1,n}-e_{n-r-2,n-1}+e_{1,n-r-2},$$ its characteristic polynomial is $x^n+x^{n-r-2}-x^{r+2}-1$.
\end{proof}

We continue our work with the next technical assertion.

\begin{lemma} [Case $s=2$] \label{case2}
Let $\FF$ be a field and let $A\in \mathbb{M}_n(\FF)$ be a nilpotent matrix consisting on a single Jordan block of rank bigger than or equal to $\frac n2$ and two Jordan blocks of size $1$. Let $r\ge 0$ be such that $r+5<n$. Let  us consider the matrices $T_{2,r}=A-N_{2,r}$ and $T'_{2,r}=A-N'_{2,r}$.
Then, the characteristic polynomials are
\begin{itemize}
\item $p_{T_{2,r}}(x)=x^n+x^{r+3}-x^{n-r-3}-1=(x^{n-r-3}+1)(x^{r+3}-1)$
\item $p_{T'_{2,r}}(x)=x^n-x^{r+3}+x^{n-r-3}-1=(x^{n-r-3}-1)(x^{r+3}+1)$.
\end{itemize}
\end{lemma}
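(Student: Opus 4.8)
The plan is to obtain both characteristic polynomials directly from the Leibniz (permutation) expansion of the determinant, after writing the two matrices out entry by entry.

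First I would specialize Definition~\ref{nilp} to $s=2$. Abbreviating $m:=n-r-4$ and using that the single Jordan block together with the two size-$1$ blocks is $A=\sum_{t=1}^{n-3}e_{t+1,t}$, a short check shows that in $T_{2,r}=A-N_{2,r}$ the subdiagonal contribution $e_{m+1,m}$ of $N_{2,r}$ cancels the entry of $A$ at the same spot. Consequently the matrix $M:=x\Id-T_{2,r}$ has $x$ on the diagonal, $-1$ along the whole subdiagonal \emph{except} for a single gap at $(m+1,m)$, the corner value $M_{1,n}=-1$, the above-diagonal values $M_{1,m+1}=+1$, $M_{m,n-2}=-1$, $M_{m+1,n-1}=-1$, and exactly one below-diagonal value $M_{n,m}=+1$. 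The hypothesis $r+5<n$ forces $m\ge 2$, so all these positions are distinct and occur where stated.

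The core step is to identify which permutations $\sigma$ contribute a nonzero term $\prod_i M_{i,\sigma(i)}$ to $\det M$. I would record, for each column $j$, the rows in which $M$ has a nonzero entry: column $1$ only at rows $1,2$; each column $j$ with $2\le j\le m-1$ only at rows $j,j+1$; column $m$ only at rows $m,n$ (the subdiagonal gap removes row $m+1$); column $m+1$ at rows $1,m+1,m+2$; each column $j$ with $m+2\le j\le n-3$ at rows $j,j+1$; column $n-2$ at rows $m,n-2,n-1$; column $n-1$ at rows $m+1,n-1,n$; and column $n$ only at rows $1,n$. A backward trace through these constraints shows that, as soon as $\sigma$ does not fix the index $1$, the whole cycle through $1$ is forced to be $\mathcal{C}_1:=(1\,\ n\,\ m\,\ m-1\,\ \cdots\,\ 2)$, while the indices $m+1,\dots,n-1$ can only be permuted along the single cycle $\mathcal{C}_2:=(m+1\,\ n-1\,\ n-2\,\ \cdots\,\ m+2)$; in particular the entries $M_{1,m+1}$ and $M_{m,n-2}$ are used by no surviving permutation. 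Since $\mathcal{C}_1$ and $\mathcal{C}_2$ are disjoint and together exhaust $\{1,\dots,n\}$, exactly four permutations survive: the identity, $\mathcal{C}_1$ with the $\mathcal{C}_2$-indices fixed, $\mathcal{C}_2$ with the $\mathcal{C}_1$-indices fixed, and the product $\mathcal{C}_1\mathcal{C}_2$.

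Evaluating these four terms with their signs finishes the proof. Using $\operatorname{sgn}$ of an $\ell$-cycle $=(-1)^{\ell-1}$, together with $|\mathcal{C}_1|=m+1$, $|\mathcal{C}_2|=n-m-1$, and the edge values read off from $M$, I expect the identity to give $x^n$, the cycle $\mathcal{C}_1$ to give $+x^{\,n-m-1}=+x^{\,r+3}$, the cycle $\mathcal{C}_2$ to give $-x^{\,m+1}=-x^{\,n-r-3}$, and $\mathcal{C}_1\mathcal{C}_2$ to give $-1$, summing to $x^n+x^{r+3}-x^{n-r-3}-1=(x^{n-r-3}+1)(x^{r+3}-1)$. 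For $T'_{2,r}$ the matrix $x\Id-T'_{2,r}$ has the same nonzero pattern, only with the signs at $(n,m),(1,m+1),(m,n-2),(m+1,n-1)$ reversed (the effect of the conjugation by $Q_{n-2s-r,n-s-r-1}$ in Definition~\ref{nilp}); the same two cycles are the only contributors — and only the flips at $(n,m)\in\mathcal{C}_1$ and $(m+1,n-1)\in\mathcal{C}_2$ actually matter — so recomputing the four signed products yields $x^n-x^{r+3}+x^{n-r-3}-1=(x^{n-r-3}-1)(x^{r+3}+1)$. The main obstacle is the combinatorial bookkeeping of the surviving permutations, in particular justifying that the subdiagonal gap at $(m+1,m)$ together with the lone below-diagonal entry $(n,m)$ forces precisely the cycles $\mathcal{C}_1,\mathcal{C}_2$, and then tracking all signs correctly; note that the entry-shifting computation used for the case $s=1$ in Lemma~\ref{case1} does not apply verbatim here because of that below-diagonal entry.
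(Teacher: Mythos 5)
Your proof is correct, but it takes a genuinely different route from the paper's. You compute $\det(x\Id-T_{2,r})$ directly via the Leibniz expansion, observing that the nonzero off-diagonal entries of $x\Id-T_{2,r}$ form a digraph whose only directed cycles are $\mathcal{C}_1=(1\ n\ m\ \cdots\ 2)$ and $\mathcal{C}_2=(m+1\ n-1\ \cdots\ m+2)$ with $m=n-r-4$, so exactly four permutations contribute. I verified your entry table against Definition \ref{nilp} (including the cancellation of $e_{m+1,m}$ and the sign flips defining $T'_{2,r}$), the forcing argument (column $m$ is entered only from row $n$, column $n$ only from row $1$, and the gap at $(m+1,m)$ forces the second cycle to close through $m+2\to m+1\to n-1$), the cycle lengths, and all four signed products for both $T_{2,r}$ and $T'_{2,r}$: they are all accurate, including the observation that the entries at $(1,m+1)$ and $(m,n-2)$ are never used. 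The paper instead conjugates by $P=P_{n,n-r-3}\cdot Q_{n-r-3,n-r-3}$, i.e., passes to the reordered (and partially sign-changed) basis $\{e_1,\ldots,e_{n-r-4},-e_n\}\cup\{e_{n-r-3},\ldots,e_{n-1}\}$, which puts $T_{2,r}$ in block upper triangular form with two companion-type diagonal blocks of sizes $n-r-3$ and $r+3$; the characteristic polynomial is then the product of those of the blocks, so the factorization $(x^{n-r-3}+1)(x^{r+3}-1)$ is visible at once (similarly for $T'_{2,r}$ with $P'=P_{n,n-r-3}\cdot Q_{n,n}$). What each approach buys: the paper's similarity argument is shorter and makes the two factors structurally evident, but requires guessing the right change of basis; your cycle expansion needs no such insight and proceeds mechanically from the definition, at the cost of the combinatorial bookkeeping you flag, which does go through. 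Your closing remark is also on point: the entry-shifting trick of Lemma \ref{case1} fails here precisely because of the below-diagonal entry at $(n,m)$ and the subdiagonal gap, which is presumably why the paper switches methods for $s=2$.
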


\begin{proof} Let $\{e_1,\dots, e_n\}$ be the canonical basis of $\mathbb{F}^n$.
Let
$$
P:= P_{n,n-r-3}\cdot Q_{n-r-3,n-r-3}
$$
and consider the new basis $$
\{e'_{i}, |\ i=1,\dots, n\}, \hbox{  where each $e'_{i}:=P(e_i)$.}
$$
This new basis is just a reordering of the canonical basis and a change of sign of one of its vectors:
$$
 \{e'_{i}, |\ i=1,\dots, n\}= \{e_1,e_2,\dots, e_{n-r-4},-e_n\}\cup \{e_{n-r-3},\dots, e_{n-1}\}.
$$

Recall that
\begin{align*}
 &T_{2,r}  =\sum_{t=1}^{n-r-5} e_{t+1,t}-e_{n,n-r-4}   +e_{1,n}      \\
    & +\sum_{t=n-r-3}^{n-1} e_{t+1,t}-e_{1,n-r-3}  + e_{n-r-4,n-2}+ e_{n-r-1,n-1}.  \\
\end{align*}

Then, the matrix $P^{-1}\cdot T_{2,r}\cdot P$ is of the form

\bigskip

$$
\left(
  \begin{array}{cccc|cccc}
    0 & 0     & 0 & -1 & * & *& * & * \\
    1 & 0     & 0 & 0 & * & * & * & *\\
    0 & \ddots & 0 & 0 & * & * & * & *\\
    0 & 0     & 1 & 0 & * & * & * & *\\
    \hline
    0 & 0     & 0 & 0 & 0 & 0 & 0 & 1 \\
    0 & 0     & 0 & 0 & 1 & 0 & 0 & 0\\
    0 & 0     & 0 & 0 & 0 & \ddots& 0 & 0 \\
    0 & 0     & 0 & 0 & 0 & 0& 1 & 0 \\
  \end{array}
\right)
$$

\bigskip

\noindent and its characteristic polynomial, which coincides with the characteristic polynomial of $T_{2,r}$, is the product of the characteristic polynomials of the two diagonal blocks, i.e., it is precisely $(x^{n-r-3}+1)(x^{r+3}-1).$

Similarly, if we consider $P':= P_{n,n-r-3}\cdot Q_{n,n}$ and the new basis
$$
\{e''_{i}, |\ i=1,\dots, n\},\ \hbox{where each $e''_{i}:=P'(e_i)$,}
$$  then,
$$
 \{e''_{i}, |\ i=1,\dots, n\}= \{e_1,e_2,\dots, e_{n-r-4},e_n\}\cup \{e_{n-r-3},\dots, -e_{n-1}\}.
$$
Thus, the matrix $(P')^{-1}\cdot T'_{2,r}\cdot P'$ is of the form

\medskip

$$
\left(
  \begin{array}{cccc|cccc}
    0 & 0     & 0 & 1 & * & *& * & * \\
    1 & 0     & 0 & 0 & * & * & * & *\\
    0 & \ddots & 0 & 0 & * & * & * & *\\
    0 & 0     & 1 & 0 & * & * & * & *\\
    \hline
    0 & 0     & 0 & 0 & 0 & 0 & 0 & -1 \\
    0 & 0     & 0 & 0 & 1 & 0 & 0 & 0\\
    0 & 0     & 0 & 0 & 0 & \ddots& 0 & 0 \\
    0 & 0     & 0 & 0 & 0 & 0& 1 & 0 \\
\end{array}
\right)
$$

\medskip

\noindent and its characteristic polynomial, which coincides with the characteristic polynomial of $T'_{2,r}$, is the product of the characteristic polynomials of the two diagonal blocks, i.e., it is precisely
$(x^{n-r-3}-1)(x^{r+3}+1).$
\end{proof}

We thus arrive at the following result in which we compute the characteristic polynomials of the matrices $T_{s,r}=A-N_{s,r}$ and $T'_{s,r}=A-N'_{s,r}$. They depend on the equivalence class of $s$ modulo 3.

\begin{theorem}\label{charpoly}
Let $\FF$ be a field,  $n\in \mathbb{N}$, and let $s\in \mathbb{N}\cup\{0\}$ such that $ 2(s+1)\le n$. Let $A\in \mathbb{M}_n(\FF)$ be a nilpotent matrix consisting on a single Jordan block and $s$ Jordan blocks of size $1$.
Then,
\begin{enumerate}
\item[(1)] If  $s=3\alpha$ for some $\alpha\ge 0$ and  we take any $r$ such that $1+2s+r<n$, then the characteristic polynomial of $T_{s,r}=A-N_{s,r}$ is $$p_{T_{3\alpha,r}}(x)=x^n-1.$$

\item[(2.1)] If $s=3\alpha+1$ for some even $\alpha\ge 0$ and we take any $r$ such that $1+2s+r<n$  then  the characteristic polynomial of $T_{s,r}=A-N_{s,r}$
is given by
$$
   p_{T_{3\alpha+1,r}}(x)=p_{T_{1,r+3\alpha}}(x)= (x^{r+3\alpha+2}-1)(x^{n-r-3\alpha-2}+1),
    $$
and the characteristic polynomial of $T'_{s,r}=A-N'_{s,r}$
is given by
$$
   p_{T'_{3\alpha+1,r}}(x)=p_{T'_{1,r+3\alpha}}(x)= (x^{r+3\alpha+2}+1)(x^{n-r-3\alpha-2}-1).
    $$
 \item[(2.2)]  If $s=3\alpha+1$ for some odd $\alpha\ge 1$,   and we take any $r$ such that $1+2s+r<n$  then  the characteristic polynomial of $T_{s,r}=A-N_{s,r}$
is given by
$$
   p_{T_{3\alpha+1,r}}(x)=p_{T'_{1,r+3\alpha}}(x)= (x^{r+3\alpha+2}+1)(x^{n-r-3\alpha-2}-1),
    $$
and the characteristic polynomial of $T'_{s,r}=A-N'_{s,r}$
is given by
$$
   p_{T'_{3\alpha+1,r}}(x)=p_{T_{1,r+3\alpha}}(x) =(x^{r+3\alpha+2}-1)(x^{n-r-3\alpha-2}+1).
    $$
  \item[(3.1)] If $s=3\alpha+2$ for some even $\alpha\ge 0$    and we take any $r$ such that $1+2s+r<n$, then  the characteristic polynomial of $T_{s,r}=A-N_{s,r}$
is given by
$$
   p_{T_{3\alpha+2,r}}(x)=p_{T_{2,r+3\alpha}}(x)=(x^{n-r-3\alpha-3}+1)(x^{r+3\alpha+3}-1),
$$
and the characteristic polynomial of $T'_{s,r}=A-N'_{s,r}$
is given by
$$
   P_{T'_{3\alpha+2,r}}(x)=p_{T'_{2,r+3\alpha}}(x) =(x^{n-r-3\alpha-3}-1)(x^{r+3\alpha+3}+1).
    $$
  \item[(3.2)]   If $s=3\alpha+2$   for some odd $\alpha\ge 0$ and we take any $r$ such that $1+2s+r<n$, then  the characteristic polynomial of $T_{s,r}=A-N_{s,r}$
is given by
$$
   p_{T_{3\alpha+2,r}}(x)=p_{T'_{2,r+3\alpha}}(x)=(x^{n-r-3\alpha-3}-1)(x^{r+3\alpha+3}+1),
$$
and the characteristic polynomial of $T'_{s,r}=A-N'_{s,r}$
is given by
$$
   p_{T'_{3\alpha+2,r}}(x)=p_{T_{2,r+3\alpha}}(x) =(x^{n-r-3\alpha-3}+1)(x^{r+3\alpha+3}-1).
$$

\end{enumerate}
\end{theorem}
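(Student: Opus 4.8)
The plan is to exploit the similarity invariance of the characteristic polynomial together with the two descending Lemmas \ref{fromTtoT'} and \ref{fromT'toT}, treating the whole statement as a single induction on the first subscript $s$. Writing $s=3\alpha+s_0$ with $s_0\in\{0,1,2\}$, I would iterate the descending lemmas exactly $\alpha$ times, starting from $T_{s,r}$ (respectively $T'_{s,r}$). Each iteration lowers the first subscript by $3$, raises the second by $3$, and toggles the prime: Lemma \ref{fromTtoT'} sends $T_{\bullet,\bullet}$ to $T'_{\bullet-3,\bullet+3}$, while Lemma \ref{fromT'toT} sends $T'_{\bullet,\bullet}$ back to $T_{\bullet-3,\bullet+3}$. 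After $\alpha$ such steps the chain terminates at a base matrix with first subscript $s_0$ and second subscript $r+3\alpha$, to which Remark \ref{case0} (for $s_0=0$), Lemma \ref{case1} (for $s_0=1$), or Lemma \ref{case2} (for $s_0=2$) applies directly. Since similar matrices share the same characteristic polynomial, the polynomial of the original matrix equals the one read off at the base.

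Before running the induction I would check that the hypotheses of the lemmas persist along the whole chain. Each step replaces $(s,r)$ by $(s-3,r+3)$, so the quantity $1+2s+r$ decreases by $3$; hence the constraint $1+2s+r<n$ is preserved (indeed strengthened) at every stage, and in particular the conditions $r'<n-1$, $r'+3<n$, and $r'+5<n$ demanded by the three base cases all hold for the terminal value $r'=r+3\alpha$. The requirement $s\ge 3$ needed to invoke a descending lemma is precisely what guarantees that a further step is available until the first subscript drops into $\{0,1,2\}$.

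The one genuinely delicate point is the bookkeeping of the prime toggle, which is exactly what produces the even/odd splits in subcases (2.1)/(2.2) and (3.1)/(3.2). Starting from an unprimed $T_{s,r}$, after $k$ steps the matrix is primed precisely when $k$ is odd; starting from a primed $T'_{s,r}$ it is unprimed precisely when $k$ is odd. Consequently, after $\alpha$ steps $T_{3\alpha+s_0,r}$ reaches $T_{s_0,r+3\alpha}$ when $\alpha$ is even and $T'_{s_0,r+3\alpha}$ when $\alpha$ is odd, and symmetrically for $T'_{3\alpha+s_0,r}$. Feeding these terminal matrices into Lemma \ref{case1} and Lemma \ref{case2} yields exactly the stated products: for $s_0=0$ the prime is immaterial because $T_{0,r'}=T'_{0,r'}$ with characteristic polynomial $x^n-1$, which is why case (1) carries no parity split, whereas for $s_0=1,2$ the toggle interchanges the two factorizations $(x^a-1)(x^b+1)$ and $(x^a+1)(x^b-1)$ according to the parity of $\alpha$.

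I expect the main obstacle to be organizational rather than computational: the substantive analytic work is already packaged inside the descending lemmas and the three base cases, so the remaining task is to verify that the induction closes under the persistent constraints and to keep the prime-parity accounting straight across the six enumerated cases.
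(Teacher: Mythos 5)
Your proposal is correct and takes essentially the same route as the paper's own proof: iterate Lemmas \ref{fromTtoT'} and \ref{fromT'toT} a total of $\alpha$ times, track the prime toggle via the parity of $\alpha$, and read off the characteristic polynomial from the base cases $s_0\in\{0,1,2\}$ (Remark \ref{case0}, Lemmas \ref{case1} and \ref{case2}). Your explicit check that $1+2s+r$ drops by $3$ per step, so that the terminal value $r'=r+3\alpha$ satisfies exactly the hypotheses $r'<n-1$, $r'+3<n$, $r'+5<n$ of the base cases, is a detail the paper leaves implicit.
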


\begin{proof} Notice that the condition $ 2(s+1)\le n$ is just equivalent to $A$ having rank bigger or equal to $\frac n2$.

\noindent (1) Suppose that $s=3\alpha$ for some $\alpha\ge 0$.  Let us consider $T_{3\alpha,r}=A-N_{3\alpha,r}$. If $\alpha\ge 1$, applying Lemma \ref{fromTtoT'} to $T_{3\alpha,r}$ we obtain the  matrix $T'_{3(\alpha-1),r+3}$, which is similar to $T_{3\alpha,r}$. If $\alpha-1\ge 1$ then use Lemma \ref{fromT'toT} to get another similar matrix $T_{3(\alpha-2),r+6}$, and repeating this process, in $\alpha$ steps we will end with either $ T_{0,r+3\alpha}$ or with $T'_{0,r+3\alpha}$, which are both equal to the matrix $\sum_{t=1}^{n-1}e_{t+1,t}+e_{1,n}$ (see Remark \ref{case0}), whose characteristic polynomial is $x^n-1$. Thus the characteristic polynomial of $T_{s,r}$ is  $x^n-1$.

\noindent (2.1) Suppose that $s=3\alpha+1$ for some even $\alpha\ge 0$. Let us consider $T_{s,r}$.  If $\alpha>1$, as in (1) we can use Lemma \ref{fromTtoT'}  and Lemma \ref{fromT'toT} several times  to obtain, in $\alpha$ steps, a similar matrix $ T_{1,r+3\alpha}$; therefore, the  characteristic polynomial of $T_{s,r}$ coincides with the characteristic polynomial of $T_{1,r+3\alpha}$, which is $(x^{r+3\alpha+2}-1)(x^{n-r-3\alpha-2}+1)$ by Lemma \ref{case1}. Similarly, if we begin with $T'_{s,r}$ and we use Lemmas \ref{fromT'toT} and \ref{fromTtoT'}, we will end up in $\alpha$ steps with the similar matrix $T'_{1,r+3\alpha}$, whose characteristic polynomial is $(x^{r+3\alpha+2}+1)(x^{n-r-3\alpha-2}-1)$ by Lemma \ref{case1}.

\noindent (2.2) Suppose that $s=3\alpha+1$ for some odd $\alpha\ge 1$. If we start with $T_{s,r}$ and apply Lemmas \ref{fromTtoT'} and  \ref{fromT'toT}, after $\alpha$ steps we will end up with the similar matrix $T'_{1,r+3\alpha}$, and if we begin with $T'_{s,r}$ we will end up with $T_{1,r+3\alpha}$.

\noindent (3.1) and (3.2) follow as (1) and (2.1) and (2.2) above, taking into account the characteristic polynomials of $T_{2,r+3\alpha}$ and $T'_{2,r+3\alpha}$ calculated in Lemma \ref{case2}.
\end{proof}

As three important consequences, we derive the following:

\begin{corollary}\label{evenn}
Let $\FF$ be a field, suppose that $n\in \mathbb{N}$ is even, and let $s\in \mathbb{N}\cup\{0\}$ such that $ 2(s+1)\le n$. Let $A\in \mathbb{M}_n(\FF)$ be a nilpotent matrix consisting on a single Jordan block and $s$ Jordan blocks of size $1$. If we take $r=\frac n2-s-1$, the characteristic polynomial of $T_{s,r}=A-N_{s,r}$ and the characteristic polynomial of $T'_{s,r}=A-N_{s,r}$ coincide and they are $x^n-1$. In particular, $T_{s,r}^n=\Id$ so that $A$ decomposes as the sum of a torsion matrix $T_{s,r}$ and a zero-square matrix $N_{s,r}$.
\end{corollary}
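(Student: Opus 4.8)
The plan is to read the characteristic polynomials straight off Theorem \ref{charpoly} and to observe that the special choice $r=\frac n2-s-1$ forces the two factors appearing there to become $x^{n/2}-1$ and $x^{n/2}+1$, whose product is $x^n-1$. First I would check that this $r$ is admissible for Definition \ref{nilp} and Theorem \ref{charpoly}: from $2(s+1)\le n$ we get $r=\frac n2-s-1\ge 0$, and $1+2s+r=s+\frac n2<n$ because $s<\frac n2$. Hence the matrices $N_{s,r}$, $T_{s,r}$, $N'_{s,r}$, $T'_{s,r}$ are defined and Theorem \ref{charpoly} applies.

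Next I would split into the residue classes of $s$ modulo $3$ exactly as in Theorem \ref{charpoly}. The arithmetic heart is the single identity $r+s+1=\frac n2$. For $s=3\alpha+1$ this gives $r+3\alpha+2=r+s+1=\frac n2$ and hence $n-r-3\alpha-2=\frac n2$, so the factorizations in cases (2.1) and (2.2) both collapse to $(x^{n/2}\mp 1)(x^{n/2}\pm 1)=x^n-1$ for both $T_{s,r}$ and $T'_{s,r}$, irrespective of the parity of $\alpha$ (changing parity only swaps which factor carries the $+1$). For $s=3\alpha+2$ the same identity gives $r+3\alpha+3=r+s+1=\frac n2$ and $n-r-3\alpha-3=\frac n2$, so cases (3.1) and (3.2) likewise yield $x^n-1$ in every instance.

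For the remaining class $s=3\alpha$, Theorem \ref{charpoly}(1) already gives that the characteristic polynomial of $T_{s,r}$ is $x^n-1$ (the value of $r$ being immaterial here). The one point not covered verbatim is $T'_{3\alpha,r}$, whose polynomial I would recover by the descent used in the proof of Theorem \ref{charpoly}(1), but starting with Lemma \ref{fromT'toT} instead of Lemma \ref{fromTtoT'}: after $\alpha$ alternating applications of Lemmas \ref{fromT'toT} and \ref{fromTtoT'} one lands on $T_{0,r+3\alpha}$ or $T'_{0,r+3\alpha}$, both equal to $\sum_{t=1}^{n-1}e_{t+1,t}+e_{1,n}$ with characteristic polynomial $x^n-1$ by Remark \ref{case0}. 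Thus the two characteristic polynomials coincide and equal $x^n-1$ in all three residue classes.

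Finally, since the characteristic polynomial of $T_{s,r}$ is $x^n-1$, the Cayley--Hamilton theorem gives $T_{s,r}^n-\Id=0$, that is $T_{s,r}^n=\Id$, so $T_{s,r}$ is a torsion matrix; together with $N_{s,r}^2=0$ from Proposition \ref{nilp2} this exhibits $A=T_{s,r}+N_{s,r}$ as a sum of a torsion matrix and a square-zero matrix. The argument is essentially bookkeeping across the six subcases of Theorem \ref{charpoly}; the only step needing genuine (if small) extra care is the $T'$ statement in the class $s=3\alpha$, which is not listed in the theorem and must be obtained from the descent argument as above.
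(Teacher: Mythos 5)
Your proof is correct and follows essentially the same route as the paper: substitute $r=\frac n2-s-1$ into the characteristic polynomials of Theorem \ref{charpoly} so that the two factors become $x^{n/2}-1$ and $x^{n/2}+1$ and multiply to $x^n-1$, then conclude torsion by Cayley--Hamilton. You are in fact more careful than the paper's own two-line proof, which neither checks admissibility of this $r$ nor addresses $T'_{s,r}$ in the class $s\equiv_3 0$ (a small genuine omission that your descent via Lemmas \ref{fromT'toT} and \ref{fromTtoT'} correctly repairs).
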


\begin{proof}
If $s\equiv_30$, since the characteristic polynomial of $T_{s,r}$ is always $x^n-1$ for any $r$ such that $1+2s+r<n$, this also is true if we take $r=\frac n2-s-1$.

If $s\equiv_31$ or $s\equiv_32$ and we take $r=\frac n2-s-1$, we obtain that the characteristic polynomial of $T_{s,r}$ is $(x^{\frac n2}-1)(x^{\frac n2}+1)=x^n-1$.
\end{proof}

\begin{corollary}
Let $\FF$ be a field, suppose that $n\in \mathbb{N}$ is odd, and let $s\in \mathbb{N}\cup\{0\}$ such that $ 2(s+1)\le n$. Let $A\in \mathbb{M}_n(\FF)$ be a nilpotent matrix consisting on a single Jordan block and $s$ Jordan blocks of size $1$.
\begin{enumerate}
\item If $s\equiv_30$ and we take any $r$ such that $1+2s+r<n$, then $N_{s,r}$ such that the characteristic polynomial of $T_{s,r}=A-N_{s,r}$ is $x^n-1$.
\item If $s\equiv_31$ or $s\equiv_32$, and:
\begin{enumerate}
\item if $n=4m+1$, $\alpha$ is even, $s=3\alpha+1$ or $s=3\alpha+2$, and we take $r=2m-s$, then $N_{s,r}$ is such that the characteristic polynomial of $T_{s,r}=A-N_{s,r}$ is $(x^{2m}+1)(x^{2m+1}-1)$;
\item if $n=4m+1$, $\alpha$ is odd, $s=3\alpha+1$ or $s=3\alpha+2$, and we take $r=2m-s$, then $N'_{s,r}$ is such that the characteristic polynomial of $T'_{s,r}=A-N'_{s,r}$ is $(x^{2m}+1)(x^{2m+1}-1)$;
\item if $n=4m+3$, $\alpha$ is even, $s=3\alpha+1$ or $s=3\alpha+2$, and we take $r=2m-s$, then  $N_{s,r}$ is such that the characteristic polynomial of $T_{s,r}=A-N_{s,r}$ is $(x^{2m+2}+1)(x^{2m+1}-1)$;
\item if $n=4m+3$, $\alpha$ is odd, $s=3\alpha+1$ or $s=3\alpha+2$, and we take $r=2m-s$, then  $N'_{s,r}$ is such that the characteristic polynomial of $T'_{s,r}=A-N'_{s,r}$ is $(x^{2m+2}+1)(x^{2m+1}-1)$.
\end{enumerate}
\end{enumerate}
\end{corollary}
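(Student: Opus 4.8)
The plan is to reduce everything to Theorem~\ref{charpoly} by substituting the prescribed value $r=2m-s$ (or an arbitrary admissible $r$ in the case $s\equiv_3 0$) into the characteristic polynomials already computed there; no new determinant or similarity computation is needed, since all the real work was carried out in Lemmas~\ref{case1} and \ref{case2} and the descending Lemmas~\ref{fromTtoT'} and \ref{fromT'toT}. The proof is therefore a finite case analysis governed by $s\bmod 3$, by the parity of $\alpha$ (which decides whether Theorem~\ref{charpoly} attaches a factor $x^{a}-1$ or $x^{a}+1$ to a given block), and by whether $n=4m+1$ or $n=4m+3$.

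First I would verify that the choice $r=2m-s$ is admissible, i.e. that $r\ge 0$ and $1+2s+r<n$, so that Theorem~\ref{charpoly} applies. From the hypothesis $2(s+1)\le n$ with $n$ odd one gets $s\le 2m-1$ when $n=4m+1$ and $s\le 2m$ when $n=4m+3$; in both cases $r=2m-s\ge 0$, while $1+2s+r=1+s+2m$ is strictly less than $n$ because $s<2m$ (respectively $s<2m+2$). This guarantees that the formulas of Theorem~\ref{charpoly} are available for the chosen parameters.

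For part (1), when $s=3\alpha$, Theorem~\ref{charpoly}(1) already yields $p_{T_{s,r}}(x)=x^{n}-1$ for every admissible $r$, so nothing further is required. For part (2) I would simply substitute. Writing $s=3\alpha+1$ and invoking case (2.1) or (2.2) of Theorem~\ref{charpoly} according to the parity of $\alpha$, the two exponents appearing there become, with $r=2m-s$,
\[
r+3\alpha+2=2m+1,\qquad n-r-3\alpha-2=\begin{cases} 2m,& n=4m+1,\\ 2m+2,& n=4m+3;\end{cases}
\]
and writing $s=3\alpha+2$ and using case (3.1) or (3.2) one finds instead $r+3\alpha+3=2m+1$ with $n-r-3\alpha-3$ equal to $2m$ or $2m+2$. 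Thus the odd exponent is invariably $2m+1$ and the complementary even exponent is $2m$ or $2m+2$. The parity of $\alpha$ then dictates, through Theorem~\ref{charpoly}, whether it is $T_{s,r}$ or $T'_{s,r}$ whose characteristic polynomial carries the factor $x^{2m+1}-1$ on the odd block and $x^{2m}+1$ (respectively $x^{2m+2}+1$) on the even block; this is precisely the matrix singled out in each subcase of the statement, and its characteristic polynomial is the claimed $(x^{2m}+1)(x^{2m+1}-1)$ (respectively $(x^{2m+2}+1)(x^{2m+1}-1)$).

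There is no serious obstacle here: the entire content is the exponent arithmetic displayed above together with the admissibility of $r=2m-s$. The only point demanding genuine attention is the bookkeeping of the two sign conventions, namely tracking to which of the families $T_{s,r}$ and $T'_{s,r}$ Theorem~\ref{charpoly} attaches the $\,-1\,$ factor as $\alpha$ alternates in parity. It is exactly this alternation that forces the use of $N_{s,r}$ for even $\alpha$ and of $N'_{s,r}$ for odd $\alpha$, so that the factor $x^{2m+1}-1$ always lands on the odd-dimensional block.
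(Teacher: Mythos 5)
Your proposal is correct and takes essentially the same route as the paper: the paper's entire proof is the single sentence that the corollary ``is an automatic consequence of Theorem~\ref{charpoly}'', and your substitution of $r=2m-s$ into the exponents of that theorem, organized by $s \bmod 3$ and the parity of $\alpha$, is precisely the bookkeeping that sentence leaves implicit. Your explicit check that $r=2m-s$ is admissible (i.e.\ $r\ge 0$ and $1+2s+r<n$) is a detail the paper omits, and it is verified correctly.
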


\begin{proof} It is an automatic consequence of Theorem \ref{charpoly}.
\end{proof}

\begin{corollary}\label{oddn}
Let $\FF$ be a field of zero characteristic, suppose that $n\in \mathbb{N}$ is odd, and let $s\in \mathbb{N}\cup\{0\}$ such that $2(s+1)\le n$. Let $A\in \mathbb{M}_n(\FF)$ be a nilpotent matrix consisting on a single Jordan block and $s$ Jordan blocks of size $1$. Then, there exists a zero-square matrix $N$ and $d\ge 1$ such that $(A-N)^d=\Id$, i.e., $A$ decomposes as the sum of a torsion matrix and a zero-square matrix.
\end{corollary}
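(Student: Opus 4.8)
The plan is to reduce everything to the immediately preceding corollary. For $n$ odd it produces, for each residue of $s$ modulo $3$, an explicit square-zero matrix — one of $N_{s,r}$ or $N'_{s,r}$ for a prescribed value of $r$ — such that the characteristic polynomial of $T:=A-N$ is one of only three polynomials: $x^n-1$ when $s\equiv_3 0$; the polynomial $(x^{2m}+1)(x^{2m+1}-1)$ when $n=4m+1$; and $(x^{2m+2}+1)(x^{2m+1}-1)$ when $n=4m+3$. Thus it is enough to show that, over a field of characteristic zero, a matrix whose characteristic polynomial is any of these three is a torsion matrix, and then to set $N$ equal to the corresponding square-zero matrix.

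To carry this out I would use the elementary observation that $T^d=\Id$ holds whenever the minimal polynomial of $T$ divides $x^d-1$; since by Cayley--Hamilton the minimal polynomial divides the characteristic polynomial $p_T$, it suffices to produce, in each of the three cases, an integer $d\ge 1$ with $p_T(x)\mid x^d-1$. The hypothesis $\mathrm{char}\,\FF=0$ enters precisely here: it guarantees that the roots of unity occurring below are pairwise distinct, which is exactly what makes the required divisibility hold.

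The verification is then case-by-case. If $s\equiv_3 0$ then $p_T(x)=x^n-1$ divides $x^n-1$, so $T^n=\Id$. If $n=4m+1$, I would use the factorisation $x^{4m}-1=(x^{2m}-1)(x^{2m}+1)$ to see that $x^{2m}+1\mid x^{4m}-1$, while trivially $x^{2m+1}-1\mid x^{2m+1}-1$; taking $d=\mathrm{lcm}(4m,2m+1)$ both factors divide $x^d-1$, and since they are coprime their product $p_T$ also divides $x^d-1$, whence $T^d=\Id$. Coprimality is the one point that needs checking: the roots of $x^{2m}+1$ are $4m$-th roots of unity whose order does not divide $2m$, those of $x^{2m+1}-1$ are $(2m+1)$-th roots of unity, and $\mcd(4m,2m+1)=1$, so the two factors can share no root (the only candidate, $x=1$, does not annihilate $x^{2m}+1$ because the characteristic is zero). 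The case $n=4m+3$ is handled identically, replacing $x^{2m}+1$ by $x^{2m+2}+1\mid x^{4m+4}-1$ and using $\mcd(4m+4,2m+1)=1$.

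The heavy lifting — the actual computation of these characteristic polynomials — has already been carried out in Theorem \ref{charpoly} by way of the two descending lemmas, so the only genuinely delicate step remaining here is the coprimality of the two cyclotomic-type factors, that is, verifying that the relevant roots of unity do not collide. This is exactly where characteristic zero is indispensable, and it is why the statement is a strictly characteristic-zero phenomenon, complementing the positive-characteristic result of Theorem \ref{nilpotentprimefield}.
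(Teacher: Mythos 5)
Your proof is correct and follows essentially the same route as the paper's: reduce to the explicit square-zero matrices $N_{s,r}$ or $N'_{s,r}$ and the characteristic polynomials given by the preceding corollary, then choose $d$ as a least common multiple so that every eigenvalue of $A-N$ is a $d$-th root of unity, with characteristic zero guaranteeing the two factors share no root (your Cayley--Hamilton/divisibility phrasing and the paper's distinct-eigenvalues phrasing are interchangeable). One point in your favour: in the cases $n=4m+3$ the paper takes $d=\operatorname{lcm}(4m+2,2m+1)$, which is a slip, since the roots of $x^{2m+2}+1$ have order dividing $4m+4$ but not $4m+2$; your choice $d=\operatorname{lcm}(4m+4,2m+1)$ is the correct exponent.
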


\begin{proof} Following the cases of the above corollary, we have:
\begin{itemize}
  \item[-] if we are in case (1), take $N=N_{s,r}$ and $d=n$; clearly $(A-N)^n=\Id$;
  \item[-] if we are in case (2)(a), take $r=2m-s$ and $N=N_{s,r}$; then if $d=lcm(4m,2m+1)$, we obtain $(A-N)^d=\Id$, because the polynomials $x^{2m}+1$ and $x^{2m+1}-1$ have no common roots, so the matrix $A-N$ has $n$ different eigenvalues (in the algebraic closure of $\mathbb{F}$), all of them roots of unity $\xi$ with $\xi^{4m}=1$ or $\xi^{2m+1}=1$;
     \item[-] if we are in case (2)(b), take $r=2m-s$ and $N=N'_{s,r}$; then if $d=lcm(4m,2m+1)$, we obtain $(A-N)^d=\Id$ arguing as in case (2)(a);
     \item[-] if we are in case (2)(c), take $r=2m-s$ and $N=N_{s,r}$; then if $d=lcm(4m+2,2m+1)$, we obtain $(A-N)^d=\Id$ arguing as in case (2)(a);
     \item[-] if we are in case (2)(d), take $r=2m-s$ and $N=N'_{s,r}$; then if $d=lcm(4m+2,2m+1)$, we obtain $(A-N)^d=\Id$ arguing as in case (2)(a).
\end{itemize}
\end{proof}

We now have all the ingredients necessary to establish our chief result in this section.

\begin{theorem}\label{nilpotenttorsionmain}
Any nilpotent matrix in $\mathbb{M}_n(\FF)$ can be written as the sum of a torsion matrix plus a square-zero matrix if, and only if, its rank is at least $\frac n2$.
\end{theorem}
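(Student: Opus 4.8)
The plan is to prove the two implications separately: the forward (necessity) direction is immediate from the already-quoted Theorem~\ref{nilpotentmain}, while the reverse (sufficiency) direction is where the machinery built in this section is used. For necessity, I would argue as follows. Suppose a nilpotent $A$ decomposes as $A=T+N$ with $T$ torsion and $N^2=0$. A torsion matrix is invertible, so this exhibits $A$ as the sum of an invertible matrix and a square-zero matrix. First note $A\ne 0$, since $A=0$ would force $T=-N$ and hence $T^2=N^2=0$, impossible for an invertible $T$. Thus $A$ is a nonzero matrix of the form (invertible) $+$ (nilpotent of index $\le 2$), and Theorem~\ref{nilpotentmain} with $k=2$ yields $\operatorname{rank}(A)\ge \frac n2$.

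For sufficiency, assume $\operatorname{rank}(A)\ge\frac n2$. Since the decomposition type is preserved under similarity (conjugating $T$ and $N$ keeps them torsion and square-zero), I may replace $A$ by its Jordan form, a direct sum of nilpotent Jordan blocks. I would then split according to the characteristic. If $\operatorname{char}\FF=p>0$, the characteristic polynomial of a nilpotent matrix is $x^n$, whose coefficients lie in the prime field and are therefore algebraic over $\FF_p$; Theorem~\ref{nilpotentprimefield} with $k=2$ then applies directly and delivers the decomposition with no further work.

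The characteristic-zero case is where the constructions of this section enter. Let the Jordan blocks of size $\ge 2$ have sizes $m_1,\dots,m_b$ and suppose there are $c$ blocks of size $1$; writing $t=b+c$ for the total number of blocks, we have $\operatorname{rank}(A)=n-t$, so the hypothesis $\operatorname{rank}(A)\ge\frac n2$ is equivalent to $t\le \frac n2$, which a short rearrangement turns into $c\le\sum_{j=1}^b(m_j-2)$ (in particular $b\ge 1$). This inequality says exactly that the $c$ size-$1$ blocks can be distributed among the larger blocks so that the $j$-th large block absorbs $s_j$ of them with $0\le s_j\le m_j-2$ and $\sum_j s_j=c$ (fill greedily). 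Each resulting group is a nilpotent matrix consisting of a single Jordan block together with $s_j$ blocks of size $1$, of total size $n_j=m_j+s_j$ satisfying $2(s_j+1)\le n_j$, which is precisely the rank hypothesis of Corollaries~\ref{evenn} and \ref{oddn}. I would apply Corollary~\ref{evenn} when $n_j$ is even and Corollary~\ref{oddn} when $n_j$ is odd, obtaining for each group a square-zero matrix $N_j$ with $T_j:=A_j-N_j$ torsion. Setting $N:=\bigoplus_j N_j$ and $T:=\bigoplus_j T_j$ gives $N^2=0$ and, since a direct sum of torsion matrices is torsion (its order divides the least common multiple of the individual orders), $T$ is torsion; hence $A=T+N$, completing the proof.

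The part I expect to be the genuine obstacle, beyond the bookkeeping with Jordan forms, is the combinatorial packing step: namely verifying that the global rank condition $t\le\frac n2$ is \emph{exactly} equivalent to the existence of a block-by-block distribution meeting every local constraint $s_j\le m_j-2$, which is what permits invoking the per-group corollaries. Everything else then rests on Corollaries~\ref{evenn} and \ref{oddn}, whose own proofs — via the descending Lemmas~\ref{fromTtoT'} and \ref{fromT'toT} and the characteristic-polynomial computations of Theorem~\ref{charpoly} — already constitute the real technical heart established above.
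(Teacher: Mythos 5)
Your proposal is correct and follows essentially the same route as the paper: necessity via Theorem~\ref{nilpotentmain} (torsion $\Rightarrow$ invertible), and sufficiency by splitting on the characteristic, invoking Theorem~\ref{nilpotentprimefield} for $p>0$, and in characteristic zero passing to the Jordan form, grouping each Jordan block of size $r>1$ with $s\le r-2$ blocks of size $1$, and applying Corollaries~\ref{evenn} and~\ref{oddn} according to the parity of the combined block size. Your treatment is in fact slightly more careful than the paper's in two spots: you verify the packing equivalence $t\le\frac n2 \iff c\le\sum_j(m_j-2)$ explicitly (the paper asserts the reordering without proof), and you dispose of the degenerate case $A=0$ in the necessity direction, which Theorem~\ref{nilpotentmain} alone (stated only for nonzero matrices) does not cover.
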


\begin{proof} According to the above   Theorem~\ref{nilpotentmain}, a necessary condition to express $A\in \mathbb{M}_n(\FF)$ as the sum $T+N$, where $T$ is a torsion matrix and $N^2=0$, is that the rank of $A$ is greater than or equal to $\frac n2$, because torsion matrices must be invertible. Conversely, to prove the sufficiency, suppose that the rank of $A\in \mathbb{M}_{n}(\mathbb{F})$ is no less than $\frac n2$.

If $\mathbb{F}$ is a field of prime characteristic, then the result follows by Corollary \ref{nilpotentprimefield} given above. Suppose from now on that $\mathbb{F}$ is a field of characteristic zero. By the primary rational canonical decomposition of any nilpotent matrix, we can assume without loss of generality that $A$ is the direct sum of its Jordan blocks.

If $A$ only contains Jordan blocks of size $r>1$, each of them can be decomposed separately in the following way

\bigskip

$$
\left(
          \begin{array}{cccc}
            0 & 0 & \dots  & 0 \\
            1 & 0 &  &  \vdots\\
             & \ddots & \ddots &  \\
            0 &  & 1 & 0\\
          \end{array}
        \right)=\left(
          \begin{array}{cccc}
            0 & 0 & \dots  & 1 \\
            1 & 0 &  &  \vdots\\
             & \ddots & \ddots &  \\
            0 &  & 1 & 0\\
          \end{array}
        \right)+\left(
          \begin{array}{cccc}
            0 &  \dots& \dots  & -1 \\
            \vdots &  &  &  \vdots\\
            \vdots &  &  &  \\
            0 &  \dots&\dots  & 0\\
          \end{array}
        \right)
$$

\bigskip

\noindent where the first matrix is a torsion matrix and the second one is zero-square.

Otherwise, since the rank of $A$ is greater than or equal to $\frac n2$, we can reorder the blocks of matrix $A$ -- which just corresponds to a reordering of the elementary divisors of $A$ -- as follows: we follow each  Jordan block of size $r>1$ by $s$ ($s\le r-2$) blocks of size $1$. If $r+s$ is even, the combined block of size $r+s$ can be decomposed as the sum of a torsion matrix and a zero-square matrix by virtue of Corollary \ref{evenn}. If, however, $r+s$ is odd, we elementarily see that the combined block of size $r+s$ can be decomposed as the sum of a torsion matrix and a zero-square matrix in view of Corollary \ref{oddn}.
\end{proof}

The next commentaries are worthwhile to show that the above theorem is somewhat true in more general settings.

\begin{remark} It is well known (see, for example, \cite{Co}, \cite{GGVdSMA1} or \cite{GGVdSMA2}) that every nilpotent matrix over a division ring is still similar to its Jordan form, i.e., it is similar to a direct sum of Jordan blocks, all of them associated to 0. Thus,
Theorem~\ref{nilpotenttorsionmain} is still valid for nilpotent matrices over division rings.
\end{remark}

The above result suggests the following question:

\medskip

\noindent{\bf Open Question 2:} Given a fixed index of nilpotence $k\ge 2$, is it true that any nilpotent matrix   in $\mathbb{M}_n(\FF)$ can be written as the sum of a torsion matrix and a nilpotent matrix of index less than or equal to $k$ if, and only if, its rank is at least $\frac nk$?

\bigskip

\noindent{\bf Funding:} The first-named author (Peter V. Danchev) was supported in part by the Bulgarian National Science Fund under Grant KP-06 No. 32/1 of December 07, 2019, the second-name author (Esther Garc\'{\i}a) was partially supported by Ayuda Puente 2022, URJC. The three authors were partially supported by the Junta de Andaluc\'{\i}a FQM264.

\vskip3.0pc

\bibliographystyle{plain}

\end{document}